\newtheoremstyle{exampstyle}
  {\topsep} 
  {\topsep} 
  {\itshape} 
  {} 
  {\bfseries} 
  {.} 
  {.5em} 
  {} 
\theoremstyle{exampstyle}
\numberwithin{equation}{section}
\newtheorem{theorem}{Theorem}
\newtheorem{lemma}{Lemma}[section]
\newtheorem{assumption}[lemma]{Assumption}
\newtheorem{corollary}[lemma]{Corollary}
\let\oldref\ref
\renewcommand{\ref}[1]{(\oldref{#1})}  
\renewcommand{\eqref}[1]{(\oldref{#1})}
\newbox\boxaddrone \newbox\boxaddrtwo
\newcommand{\norm}[1]{\left\Vert#1\right\Vert}
\newcommand{\N}{\mathcal{N}}
\newcommand{\ba}{\begin{eqnarray*}}
\newcommand{\ea}{\end{eqnarray*}}
\newcommand{\be}{\begin{equation}}
\newcommand{\ee}{\end{equation}}
\newcommand{\bea}{\begin{eqnarray}}
\newcommand{\eea}{\end{eqnarray}}
\newbox\boxaddrone \newbox\boxaddrtwo
\def\N+{n\in\mathbb{N}^{+}}
\def\1d{\mathcal{D}((-\Delta)^{\gamma_1+1/2})}
\def\2d{\mathcal{D}((-\Delta)^{\gamma_2+1})}
\def\la{\langle}
\def\ro{\rangle_{L^2(\Omega)}}
\def\rp{\rangle_{L^2(\partial\Omega)}}
\def\re{\operatorname{Re}}
\def\s{\text{Span}}
\begin{document}
\title{Uniqueness and numerical inversion in the time-domain fluorescence diffuse optical tomography}

\author[1,2]{Chunlong Sun\thanks{sunchunlong@nuaa.edu.cn}}
\author[3]{Zhidong Zhang\thanks{zhangzhidong@mail.sysu.edu.cn}}
\affil[1]{College of Science, Nanjing University of Aeronautics and Astronautics, Nanjing 211106, Jiangsu, China}
\affil[2]{Nanjing Center for Applied Mathematics, Nanjing 211135, Jiangsu, China}
\affil[3]{School of Mathematics (Zhuhai), Sun Yat-sen University, Zhuhai 519082, Guangdong, China}

\maketitle

\begin{abstract}
\noindent This work considers the time-domain fluorescence diffuse optical tomography (FDOT). We recover the distribution of fluorophores in biological tissue by the boundary measurements. With the Laplace transform and the knowledge of complex analysis, we build the uniqueness theorem of this inverse problem. After that, the numerical reconstructions are considered. We introduce a non-iterative inversion strategy by peak detection and an iterative inversion algorithm under the framework of regularizing scheme, then give several numerical examples in three-dimensional space illustrating the performance of the proposed inversion schemes.\\

\noindent Keywords: inverse source problem, uniqueness,  diffusion equation, numerical inversion.\\

\noindent AMS Subject Classifications: 35R30, 65M32.
\end{abstract}

\section{Introduction.}
\setcounter{equation}{0}
\subsection{Background and literature.} 
 Recently, fluorescence diffuse optical tomography (FDOT) is rapidly gaining acceptance as an important diagnostic and monitoring tool of symptoms in medical applications \cite{NitziachristoV02, Mycek03, Rudin13}.  It aims to recover the fluorophores quantitatively from some  measurements specified on the medium surface. Compared to X-ray computed tomography (CT), magnetic resonance imaging (MRI) and positron emission tomography (PET), FDOT possesses some advantages like low cost and portable, and therefore it has drawn more and more attention.

For the FDOT, two physical processes are coupled, namely, excitation and fluorescence (emission), which are described by the photon transport model. The system of Maxwell's equations is the rigorous photon transport model to describe the light propagation. However, since the wave property of the photon is lost by multiple scattering in strongly turbid medium such as biological tissue, the photon propagation through turbid media for both excitation and emission can be governed by the Boltzmann radiative transfer equation or shortly the radiative transfer equation (RTE) \cite{Durduran10,Jiang11,Marttelli}. This equation describes the propagation of photon in scattering media and is given as:  
\begin{equation}\label{rt}
\begin{aligned}
c^{-1}\partial_t\Phi(x,\theta,t)=&-\theta \cdot \nabla\Phi(x,\theta,t)-\big(\mu_a+\mu_f+\mu_s\big)\Phi(x,\theta,t) \\
&+ \mu_s\int_{\mathbb{S}^2}\eta(\theta,\theta')\Phi(x,\theta',t)d\theta'+S(x,\theta,t),
\end{aligned}
\end{equation}
where $\Phi(x,\theta,t)$ is the {\it radiant intensity} at $x$ in time $t$, and $\theta$ is a unit vector pointing in the direction of interest. The absorption and scattering coefficients of medium, denoted by $\mu_a(x)$ and $\mu_s(x)$, are the inverses of the mean free paths for absorption and scattering, respectively. Here $\mu_f(x)$ denotes the absorption coefficient of the fluorophores, and $c$ is the speed of photon inside the medium. The scattering phase function $\eta(\theta,\theta')$, which describes the probability that a photon traveling in direction $\theta$ is scattered within the unit solid angle around the direction $\theta'$, is given by 3-dimensional Henyey-Greenstein function \cite{Marttelli}.

However, the cost of solving RTE \eqref{rt} is extremely high, and it is shown that the diffusion equation (DE) can be a  sufficiently accurate approximation to the RTE but has much lower computational cost. Therefore people usually replace the RTE by the diffusion equation. The basic idea of the diffusion approximation is: when scattering is much stronger than absorption, the radiant intensity $\Phi$ can be expressed as an isotropic {\it photon density} plus a small {\it photon flux}, and sequentially the transport equation \eqref{rt} can be reduced to a diffusion equation (see Section \ref{sec_pre_model}). We consider the diffusion approximation for both excitation and emission, and give the propagation of excitation light and emission (fluorescence) light as 
\begin{equation}\label{excitation-Ue}
\begin{cases}
\begin{aligned}
\left(c^{-1}\partial_t -\mu_D\Delta +\mu_a \right)u_e&=0, && (x,t)\in\Omega\times (0,\infty),\\
u_e(x,0)&=0, && x\in\Omega,\\
\frac{\partial u_e}{\partial\nu}+\beta u_e&=\delta(x-x_s)\delta(t), && (x, x_s,t)\in\partial\Omega\times\partial\Omega\times(0,\infty),
\end{aligned}
\end{cases}
\end{equation}
and
\begin{equation}\label{emission-um}
\begin{cases}
\begin{aligned}
\left(c^{-1}\partial_t -\mu_D\Delta +\mu_a \right) u_m&=S_0[\mu_f,u_e](x, t; x_s), && (x,t)\in\Omega\times (0,\infty),\\
u_m(x,0)&=0, && x\in\Omega,\\
\frac{\partial u_m}{\partial\nu}+\beta u_m&=0, && (x,t)\in\partial\Omega\times(0,\infty),
\end{aligned}
\end{cases}
\end{equation}
where $u_e(x,t)$ and $u_m(x,t)$ are the photon densities of excitation light and emission light, respectively. Here $\Omega\subset\mathbb R^d$ is an open bounded subset with sufficiently smooth boundary; the vanishing initial condition and Robin impedance boundary condition with $\beta>0$ are used; $\mu_D$ is the photon diffusion coefficient defined by $\mu_D:=1/(3\mu_s(1-g))$ where $g$ is the anisotropy parameter. We set $\mu_D$ and $\mu_a$ as positive constants in this work. The source term $S_0$ for $u_m$ on the right-hand side of \eqref{emission-um} contains the excitation $u_e$ and is specified by
\begin{equation}\label{source} 
S_0[\mu_f,u_e](x,t;x_s)=\frac{\mu_f(x)}{\tau}\int_0^t e^{-\frac{t-s}{\tau}} u_e(x,s;x_s) \, ds,
\end{equation} 
where $\tau > 0$ is the fluorescence lifetime.

The FDOT based on RTE model or DE model has drawn extensive attention of researchers in recent years. For such inverse problems, three kinds of measurements, like time domain \cite{Gao08, Lam05}, continuous wave \cite{Ducros11, Liu20,Patwardhan05} and frequency domain \cite{Corlu-etal07,Nitziachristos01} are used. For these optical imaging problems, the necessary regularization techniques such as Tikhonov regularization, sparse regularization methods and  hybrid regularization methods have been introduced to overcome the ill-posedness of the inverse problems \cite{Correia10,Dutta12,Liu20}. Especially, the sparse regularization methods have additional advantages in promoting sparsity and higher spatial resolution for the cases that the target is relatively small compared to the background. Further, aiming to the best reconstruction result, one must consider how to place the source-detector pairs onto the object surface. To answer this problem, the definition of admissible set of possible sensor patterns on the boundary and a proper optimal condition are usually needed \cite{Bergounioux19,Demidenko05,Dutta10,Hyvonen14}. 

Above all, most existing works focus on improving the quality of the reconstruction, especially the spatial resolution and the reconstruction speed. However, in the aspect of theorical analysis, the uniqueness of FDOT has not been rigorously investigated as far as we know. In this work, we will study the uniqueness of time-domain FDOT. The details are contained in the main theorem, which is stated in the next subsection.

\subsection{The main theorem.} 
In this work, we consider the time-dependent fluorophores which means that the absorption coefficient $\mu_f$ depends on both $x$ and $t$. We suppose that $S_0[\mu_f,u_e](x,t;x_s)$ in  \eqref{source} possesses the following semi-discrete formulation: 
\begin{equation}\label{ua-time}
S_0[\mu_f,u_e]=\sum_{k=1}^K p_k(x)\chi_{{}_{t\in [t_{k-1},t_k)}},
\end{equation} 
where the time mesh $\{t_k\}_{k=0}^K$ is given and the spatial components $\{p_k(x)\}_{k=1}^K$ are undetermined. For the mesh $\{t_k\}_{k=0}^K$, $K$ can be infinity and $\inf\{|t_k-t_{k+1}|:k=0,\cdots,K-1\}>0$; for the unknown $\{p_k(x)\}_{k=1}^K$, we consider to recover them in $L^2(\Omega)$, i.e. set $\{p_k(x)\}_{k=1}^K\subset L^2(\Omega)$. The representation \eqref{ua-time} contains the information of the time evolution process of the unknown fluorophores. Since the excitation $u_e$ is known, we can extract the information of $\mu_f$ from the components $\{p_k(x)\}_{k=1}^K$. The measurements we used are the boundary data given by  
\begin{equation}\label{Um-measure}
u_m(x_d,t;x_s),\quad x_d\in \Gamma_d\subset\partial\Omega, \ x_s\in \Gamma_s\subset\partial\Omega, \ t\in(0,\infty),
\end{equation}
where $\Gamma_s$ and $\Gamma_d$ are the excitation area and observed area, respectively. From the Robin boundary condition, we can get the flux $\frac{\partial u_m}{\partial\nu}\big|_{\Gamma_d\times(0,\infty)\times\Gamma_s}$ from the measurements \eqref{Um-measure}. 
Hence, in this work, the so-called time-domain FDOT based on the initial-boundary value problems \eqref{excitation-Ue}-\eqref{emission-um} is to solve the inverse problem:
\begin{equation*}
\text{recovering} \ \{p_k(x)\}_{k=1}^K \ \text{in \eqref{ua-time} from the measurements \eqref{Um-measure} or the flux}\  \frac{\partial u_m}{\partial\nu}\big|_{\Gamma_d\times(0,\infty)\times\Gamma_s}.
\end{equation*}

Moreover, if either of $\mu_D$ and $\mu_a$ is also unknown and we want to recover the unknown coefficient and the source $\mu_f$ simultaneously, an extra condition needs to be set. Letting $(\lambda_1,\varphi(x))$ be one principal eigenpair of the operator $-\Delta$ on $\Omega$ with Robin boundary condition, we give the following condition:   
\begin{equation}\label{condition_mu_a}
 \la p_1(\cdot),\varphi(\cdot)\ro\ne 0.
\end{equation}
Here the notation $\la\cdot,\cdot\ro$ means the inner product in space $L^2(\Omega)$. For the details of the eigensystem of $-\Delta$ on $\Omega$, see Section \ref{subsec_eigen}. Also we give Assumption \ref{condition_p_regularity} in Section \ref{subsec_eigen}, which is referred in the statement of the uniqueness theorem. Furthermore, to make our analysis more convenient, we set $c=1$ in models  \eqref{excitation-Ue}-\eqref{emission-um}. Now it is time to state the uniqueness theorem. 

\begin{theorem}\label{uniqueness}
Set Assumption \ref{condition_p_regularity} be valid, and  $\Gamma_s$ and $\Gamma_d$ be a single point and a nonempty open subset of boundary $\partial\Omega$, respectively.   
For the two sets of unknowns $\big\{\mu_D,\mu_a, \{p_k(x)\}_{k=1}^K \big\}$ and $\big\{\tilde\mu_D,\tilde\mu_a,\{\tilde p_k(x)\}_{k=1}^K \big\}$, we denote the corresponding solutions of equation  \eqref{emission-um} by $u$ and $\tilde u$ respectively, and are provided $$
\frac{\partial u}{\partial\nu}(x_d,t;x_s)=\frac{\partial \tilde u}{\partial\nu}(x_d,t;x_s),\quad (x_d,x_s,t)\in\Gamma_d\times\Gamma_s\times(0,\infty).
$$ 
Then we conclude that:
\begin{itemize}
 \item [(\romannumeral 1)] given $\mu_a=\tilde \mu_a$ and 
 $\mu_D=\tilde \mu_D$, we have 
 $$p_k(x)=\tilde p_k(x)\ \text{in}\ L^2(\Omega)\ \text{for}\ k=1,\cdots, K;$$
 \item [(\romannumeral 2)] with condition \eqref{condition_mu_a} and $\mu_D=\tilde \mu_D$, we have  $$\mu_a=\tilde \mu_a,\ p_k(x)=\tilde p_k(x)\ \text{in}\ L^2(\Omega)\ \text{for}\ k=1,\cdots, K;$$
  \item [(\romannumeral 3)] with condition \eqref{condition_mu_a} and $\mu_a=\tilde \mu_a$, we have  $$\mu_D=\tilde\mu_D,\ p_k(x)=\tilde p_k(x)\ \text{in}\ L^2(\Omega)\ \text{for}\ k=1,\cdots, K.$$ 
\end{itemize}
\end{theorem}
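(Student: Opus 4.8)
The strategy is to pass to the Laplace transform in time and turn the vanishing of the boundary flux into a statement about a meromorphic function of the transform variable $z$, whose poles sit at the shifted eigenvalues of the Robin Laplacian and whose residues carry the spatial information. Let $\{(\lambda_n,\varphi_n)\}$ denote the eigensystem of $-\Delta$ with the Robin condition, write $\mu_n:=\mu_a+\mu_D\lambda_n$, and set $w:=u-\tilde u$. Applying $\hat u(x,z)=\int_0^\infty e^{-zt}u(x,t)\,dt$ to \eqref{emission-um} and expanding in $\{\varphi_n\}$ gives, for $\re z$ large,
$$\partial_\nu\hat u(x_d,z)=\frac1z\sum_{j=0}^{K}e^{-zt_j}\,G_j(z,x_d),\qquad G_j(z,x_d)=\sum_n\frac{a_j^{(n)}\,\partial_\nu\varphi_n(x_d)}{z+\mu_n},$$
where the $a_j^{(n)}$ are the successive jumps of $\la p_k,\varphi_n\ro$ across the time mesh, i.e. $a_0^{(n)}=\la p_1,\varphi_n\ro$, $a_j^{(n)}=\la p_{j+1}-p_j,\varphi_n\ro$ for $1\le j\le K-1$, and $a_K^{(n)}=-\la p_K,\varphi_n\ro$. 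Assumption \ref{condition_p_regularity} is what guarantees convergence of these series and the legitimacy of the termwise traces $\partial_\nu\varphi_n|_{\Gamma_d}$ and the rearrangements below.

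For part (\romannumeral1) the coefficients coincide, so the flux identity and injectivity of the Laplace transform give $\sum_{j=0}^K e^{-zt_j}\big(G_j(z,x_d)-\tilde G_j(z,x_d)\big)=0$ for $\re z$ large and all $x_d\in\Gamma_d$. The first key step is a \emph{separation lemma}: since $0=t_0<t_1<\cdots$ and each difference $G_j-\tilde G_j$ is analytic and decays like $1/z$ in the right half-plane, the time-exponentials $e^{-zt_j}$ are linearly independent over such coefficient functions (peel off the slowest-decaying term $e^{-zt_0}=1$ by letting $\re z\to\infty$, observe that a resolvent-type function cannot be exponentially small unless it vanishes, and induct). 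This forces $G_j\equiv\tilde G_j$ for every $j$. The second key step is \emph{unique continuation}: taking the residue at the simple pole $z=-\mu_n$ leaves $\sum_\ell(a_j^{(n,\ell)}-\tilde a_j^{(n,\ell)})\,\partial_\nu\varphi_{n,\ell}(x_d)=0$ on $\Gamma_d$, where $\ell$ runs over an orthonormal basis of the $\lambda_n$-eigenspace. The combination $\psi=\sum_\ell(a_j^{(n,\ell)}-\tilde a_j^{(n,\ell)})\varphi_{n,\ell}$ is itself a $\lambda_n$-eigenfunction, so it solves $-\Delta\psi=\lambda_n\psi$ with the Robin condition and, having $\partial_\nu\psi=0$ on $\Gamma_d$, also $\psi=0$ there; Holmgren's theorem (constant coefficients) yields $\psi\equiv0$. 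Hence $a_j^{(n,\ell)}=\tilde a_j^{(n,\ell)}$ for all $n,\ell,j$, and undoing the jump relation gives $\la p_k,\varphi_n\ro=\la\tilde p_k,\varphi_n\ro$ for all $n,k$, that is $p_k=\tilde p_k$ in $L^2(\Omega)$.

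For parts (\romannumeral2) and (\romannumeral3) the coefficients are no longer equal, but the separation lemma uses only the common time mesh, so it still yields $G_0\equiv\tilde G_0$; in case (\romannumeral2) this reads $\sum_n \la p_1,\varphi_n\ro\,\partial_\nu\varphi_n(x_d)/(z+\mu_a+\mu_D\lambda_n)=\sum_n \la\tilde p_1,\varphi_n\ro\,\partial_\nu\varphi_n(x_d)/(z+\tilde\mu_a+\mu_D\lambda_n)$, with the analogous identity (now with $\mu_D$ varying) in case (\romannumeral3). Here the condition \eqref{condition_mu_a}, namely $\la p_1,\varphi\ro\ne0$ for the principal pair $(\lambda_1,\varphi)$, is decisive: since $\lambda_1$ is simple and $\partial_\nu\varphi\not\equiv0$ on $\Gamma_d$ (Hopf's lemma for the positive principal eigenfunction), the left-hand side has a genuine pole at the rightmost location $z=-(\mu_a+\mu_D\lambda_1)$. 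Matching the rightmost singularity of the two equal meromorphic functions forces $\mu_a=\tilde\mu_a$ in case (\romannumeral2), and, since $\mu_a=\tilde\mu_a$ and $\lambda_1>0$ for the Robin problem with $\beta>0$, forces $\mu_D=\tilde\mu_D$ in case (\romannumeral3). Once the coefficient is identified the two pole families coincide, $\mu_n=\tilde\mu_n$, and the argument of part (\romannumeral1) applies to each $G_j\equiv\tilde G_j$, giving $p_k=\tilde p_k$.

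I expect the main obstacle to be precisely this rightmost-pole matching in parts (\romannumeral2)--(\romannumeral3). The delicate point is to ensure that the rightmost pole of the \emph{other} dataset is also carried by its principal mode, so that equating pole locations isolates the single unknown constant rather than an accidental coincidence $\mu_a+\mu_D\lambda_1=\tilde\mu_a+\mu_D\lambda_{m}$ with $m>1$. One removes this by matching residues as well: the residue identity ties $\partial_\nu\varphi_m|_{\Gamma_d}$ to $\partial_\nu\varphi_1|_{\Gamma_d}$, and through the Robin relation this would give an eigenfunction combination with vanishing Cauchy data on $\Gamma_d$, which unique continuation rules out for $m>1$. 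The two supporting ingredients—linear independence of the $e^{-zt_j}$ over resolvent-type coefficients (where the complex analysis, and the convergence care needed when $K=\infty$, enters, controlled by Assumption \ref{condition_p_regularity}) and Holmgren continuation from the open patch $\Gamma_d$—are otherwise routine but must be formulated carefully.
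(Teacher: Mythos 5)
Your proposal is correct and follows essentially the same route as the paper: Laplace transform of the flux, peeling off the time-exponentials $e^{-t_ks}$ by letting $\re s\to\infty$, boundary unique continuation on the open patch $\Gamma_d$ to identify the eigenmode coefficients, and a smallest-exponent/rightmost-singularity comparison (with condition \eqref{condition_mu_a} guaranteeing a genuine principal-mode contribution) for parts (\romannumeral 2)--(\romannumeral 3). The only cosmetic differences are that you re-derive in-line, via residues and Holmgren continuation, the auxiliary facts the paper imports from \cite{LinZhangZhang:2021} and \cite{RundellZhang:2020}, and that for (\romannumeral 2)--(\romannumeral 3) you match the rightmost pole in the Laplace domain where the paper equivalently isolates the slowest-decaying exponential of the Dirichlet series in the time domain on $(t_0,t_1]$.
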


From Theorem \ref{uniqueness}, we can derive the following corollary straightforwardly. 
\begin{corollary}\label{uniqueness-x}
Under the conditions of Theorem \ref{uniqueness}, the boundary data \eqref{Um-measure} can identify the stationary fluorophores uniquely.
\end{corollary}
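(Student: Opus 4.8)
The plan is to reduce the stationary problem to the source–uniqueness that already underlies Theorem~\ref{uniqueness}, and then to invert the known temporal kernel that links the emission source to $\mu_f$. First I would record that, for a time–independent $\mu_f(x)$, the emission source \eqref{source} factorizes as
$$S_0[\mu_f,u_e](x,t;x_s)=\mu_f(x)\,h(x,t;x_s),\qquad h(x,t;x_s):=\frac{1}{\tau}\int_0^t e^{-(t-s)/\tau}u_e(x,s;x_s)\,ds,$$
where the kernel $h$ is entirely determined by the known excitation field $u_e$. Note that a stationary source is not itself of the piecewise-constant form \eqref{ua-time}, so the literal statement of Theorem~\ref{uniqueness} does not apply verbatim; what I will borrow is its underlying mechanism. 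Under the hypotheses of the theorem the coefficients $\mu_D,\mu_a$ are either assumed equal or are shown to coincide (cases (ii) and (iii), via \eqref{condition_mu_a}); in every case $u_e(\cdot,\cdot;x_s)$, and hence $h$, is a known function common to the two stationary distributions being compared.

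The second step is to extract uniqueness of the source from equality of the boundary flux. The argument proving Theorem~\ref{uniqueness} proceeds by Laplace transform in $t$ followed by a complex-analytic inverse-source analysis; I expect that, prior to imposing the ansatz \eqref{ua-time}, it already establishes that $\partial_\nu u=\partial_\nu\tilde u$ on $\Gamma_d\times\Gamma_s\times(0,\infty)$ forces the two emission sources to agree. Granting this source-uniqueness and applying it to the separable sources above gives
$$\bigl(\mu_f(x)-\tilde\mu_f(x)\bigr)\,h(x,t;x_s)=0\qquad\text{for all } t>0 \text{ and } x\in\Omega.$$
Equivalently, in the Laplace variable $z$ the identity reads $\bigl(\mu_f(x)-\tilde\mu_f(x)\bigr)(1+\tau z)^{-1}\widehat{u_e}(x,z;x_s)=0$, so it remains only to know that the known factor does not vanish identically.

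The final step, and the main obstacle, is to show that $h$ (equivalently $\widehat{u_e}$) is nondegenerate, so that the scalar field $\mu_f-\tilde\mu_f$ can be divided out. This reduces to positivity of the excitation field: by the strong parabolic maximum principle for \eqref{excitation-Ue}, together with the fact that the impulsive Robin input $\delta(x-x_s)\delta(t)$ drives a strictly positive photon density, one expects $u_e(x,t;x_s)>0$ throughout $\Omega\times(0,\infty)$, whence $h(x,t;x_s)>0$ for every $t>0$. Making this positivity rigorous in the presence of the singular boundary data is the delicate point; once it is secured, dividing by $h$ at any fixed $t>0$ yields $\mu_f=\tilde\mu_f$ in $L^2(\Omega)$, which is exactly the assertion of Corollary~\ref{uniqueness-x}.
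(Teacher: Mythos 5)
Your proposal diverges from the paper at the outset, because the paper offers no independent argument for Corollary \ref{uniqueness-x} at all: it simply regards the stationary target as being ``covered by'' the semi-discrete framework, i.e.\ Theorem \ref{uniqueness} gives uniqueness of the components $\{p_k\}$, hence of $S_0$, and since $u_e$ is known one extracts $\mu_f$ from $S_0$. You are right to observe that a stationary $\mu_f(x)$ produces $S_0=\mu_f(x)h(x,t;x_s)$ with $h$ continuous and non-constant in $t$, which is not literally of the form \eqref{ua-time}; that is a genuine looseness in the paper which your write-up correctly flags, and your factorization and final division step are the natural way to close the loop.

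However, your repair has a gap of its own, precisely at the step you phrase as an expectation. The paper's entire machinery --- Lemma \ref{data_formula}, the Laplace-transform identity \eqref{laplace}, and Lemmas \ref{lemma_uniqueness_1} and \ref{lemma_uniqueness} --- is built on the piecewise-constant-in-time ansatz; nothing in Section \ref{sec_uni} establishes, ``prior to imposing the ansatz,'' that equal boundary fluxes force two general space--time sources to coincide. Such a statement is in fact false: the inverse source problem for a parabolic equation with an arbitrary source $F(x,t)$ and lateral boundary data is non-unique, which is exactly why the paper imposes the structure \eqref{ua-time} in the first place. So you cannot simply ``borrow the mechanism'' to conclude $\bigl(\mu_f-\tilde\mu_f\bigr)h=0$; you would need a dedicated uniqueness argument for sources of the known-modulated form $\mu_f(x)h(x,t;x_s)$ --- for instance a Laplace-transform computation exploiting the explicit structure $\widehat{h}=(1+\tau s)^{-1}\widehat{u_e}$ together with the eigenfunction expansion, or a positivity/monotonicity argument in the style of Isakov. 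The strict positivity of $u_e$ (and hence of $h$) that you invoke at the end is plausible and you honestly flag it as unproven, but the decisive missing piece is the source-uniqueness step, not the positivity.
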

The above corollary considers the case of stationary fluorescence target, i.e. $\mu_f:=\mu_f(x)$. This is the common problem in the community of fluorescence tomography, which is covered by Theorem \ref{uniqueness}.

\subsection{Contribution and outline.}
The literature on the uniqueness of time-domain FDOT is relatively rare and the existing conclusions seem not to be convenient in practical applications. Here we discuss \cite{Liu:2020} as an example. In this article, the authors aim to identify the absorption coefficient $\mu_f(x)$ from time-resolved boundary measurements. But this work  requires strong prior assumptions on $\mu_f(x)$. The authors prove that for $\Omega=\mathbb{R}_+^3$, $x=(\tilde x, x_3)\in \mathbb{R}^2\times\mathbb{R}_+^1$ and small lifetime $\tau$, by supposing $\mu_f(x)$ has the variable separable form $\mu_f(x)=p(\tilde x)q(x_3)$ with known vertical information $q(x_3)$, the horizontal information $p(\tilde x)$ can be uniquely determined from the boundary measurements \eqref{Um-measure} with $(x_d,x_s)\in \{x_d^0\}\times \partial\Omega$ or $(x_d,x_s)\in \partial\Omega\times\{x_s^0\}$. Here $x_d^0$ and $x_s^0$ are the single points on $\partial\Omega$. However, we see that the used measurements require either the source point $x_s$ or the detector point $x_d$ to cover the whole boundary surface, which implies the impractical cost in  applications.

In this work, the uniqueness of time-domain FDOT in a more general framework is established. The concerned unknown $\mu_f$ has a more general formulation, and we use the sparse boundary measurements (we call the used measurements as sparse boundary measurements since the observed area $\Gamma_d$ can be an arbitrarily open subset of boundary $\partial\Omega$ and the excitation area $\Gamma_s$ only needs to be a single point). Theorem \ref{uniqueness} confirms that the semi-discrete unknown $\mu_f$ can be uniquely determined by the sparse boundary measurements. Therefore, this conclusion is of great significance in practical applications. Moreover, Theorem \ref{uniqueness} also concerns the cases of recovering $\mu_f$ and one of $\mu_a$ or $\mu_D$ simultaneously. The reconstructions of $\mu_a$ and $\mu_D$ are known as the diffuse optical tomography (DOT). Some aspects of uniqueness and ill-posedness of DOT are considered \cite{Anikonov84,Anikonov85,Arridge98,Bal09,Choulli,Harrach09,He2000}. Theorem \ref{uniqueness} gives partial uniqueness result for the DOT with fluorescence but it requires constant absorption and scattering. Hence the main theorem in this work is also useful in DOT to some extent. 

The rest of this article is organized as follows. In Section \ref{sec_pre}, we first introduce the semi-discretized model for our time-domain FDOT based on diffusion equations. Then we collect several preliminary works. We prove the uniqueness theorem in Section \ref{sec_uni}. The Laplace transform and some knowledge of complex analysis will be used. The numerical inversions will be considered in Section \ref{sec_num}. The inversion scheme minimizing the regularized cost functional is implemented by an iterative process. We show the validity of the proposed scheme by several numerical examples in three-dimensional space.

\section{The diffusion approximation and some preliminary results.}\label{sec_pre}

\subsection{The diffusion approximation.}\label{sec_pre_model}
We refer to \cite{Arridge99,Arr09} for the details of diffusion approximation (or $P_1$-approximation) and \cite{Liu:2020} for the error estimates of model approximation. However, we give a sketch for the derivation from RTE to DE for the completeness of this article. 

First, one can expand $\Phi(x,\theta,t)$ and $S(x,\theta,t)$ in RTE \eqref{rt} into spherical harmonics as 
\begin{align*}
\Phi(x,\theta,t)&=\sum_{l=0}^{\infty}\sum_{m=-l}^{l} \left(\frac{2l+1}{4\pi}\right)^{1/2} \Phi_{l,m}(x,t) Y_{l,m}(\theta),\\
S(x,\theta,t)&=\sum_{l=0}^{\infty}\sum_{m=-l}^{l} \left(\frac{2l+1}{4\pi}\right)^{1/2} S_{l,m}(x,t) Y_{l,m}(\theta).
\end{align*}
Keeping only the terms $l\leq 1$ in above expansions, we have the approximation
\begin{equation}\label{sun1-1}
\Phi(x,\theta,t) \approx \frac{1}{4\pi}u(x,t)+\frac{3}{4\pi}\theta\cdot {\mathbf{J}}(x,t), \quad \theta\in\mathbb{S}^2,
\end{equation}
where the quantity \begin{equation*}
u(x,t):=\int_{\mathbb{S}^2}\Phi(x,\theta,t) \, d\theta
\end{equation*}
is called {\it photon density}, and the quantity \begin{equation*}
\mathbf{J}(x,t):=\int_{\mathbb{S}^2} \theta\cdot\Phi(x,\theta,t) \, d\theta
\end{equation*}
is called {\it photon flux}. Similarly, we have
\begin{equation}\label{sun1-2}
S(x,\theta,t) \approx \frac{1}{4\pi}S_0(x,t)+\frac{3}{4\pi}\theta\cdot S_1(x,t), \quad \theta\in\mathbb{S}^2,
\end{equation}
where $S_0(x,t):=\int_{\mathbb{S}^2} S(x,\theta,t) \, d\theta$ and $S_1(x,t):=\int_{\mathbb{S}^2} \theta\cdot S(x,\theta,t) \, d\theta$ are the isotropic component and the first angular moment of source term, respectively. 

Next, by owing \eqref{sun1-1} and \eqref{sun1-2} into the RTE \eqref{rt}, and then integrating the RTE and the RTE multiplied by $\theta$ over ${\mathbb{S}^2}$, we have 
\begin{equation}\label{S0}
\begin{aligned}
\left(c^{-1} \partial_t + \mu_a +\mu_f\right) u(x,t)+\nabla\cdot \mathbf{J}(x,t)  &=S_0, \\
\left(c^{-1}\partial_t + \mu_a+\mu_f+\mu_s' \right) \mathbf{J}(x,t) + \frac{1}{3}\nabla u(x,t)& = S_1,
\end{aligned}
\end{equation}
where $\mu_s'$ is the reduced scattering coefficient defined by $\mu_s':=\mu_s(1-g)$ with $g$ as the anisotropy parameter. By $\partial|\mathbf{J}|/\partial t\ll c\mu_s'|\mathbf{J}|$ and the assumption that sources are isotropic, we arrive at the so-called Fick's law 
\begin{equation*}
\mathbf{J}=-\frac{1}{3(\mu_a+\mu_f+\mu_s')}\nabla u.
\end{equation*}
Since $\mu_a,\,\mu_f \ll \mu_s'$ in the strong scattering medium, we can approximate 
\begin{equation}\label{Fick}
\mathbf{J}=-\frac{1}{3\mu_s'}\nabla u.
\end{equation}

Finally, supposing the absorption of the fluorophores is much smaller than the absorption of the medium, from substituting \eqref{Fick} into \eqref{S0} we immediately have the diffusion equation given by
\begin{equation*}
\left(c^{-1}\partial_t -\nabla \cdot \frac{1}{3\mu_s'(x)} \nabla  +\mu_a(x) \right) u=S_0(x,t).
\end{equation*}

We deduce the above diffusion approximation for both excitation and emission, and suppose that the optical parameters for emission are constants and same to those for the excitation. Then the system  \eqref{excitation-Ue}-\eqref{emission-um} can be derived.

\subsection{Some preliminary results.}
\subsubsection{The eigensystem of $-\Delta$ on $\Omega$.}
\label{subsec_eigen}
For the operator $-\Delta$ on $H^2(\Omega)\cap H_0^1(\Omega)$ with Robin boundary condition, we denote the eigensystem by $\{\lambda_n,\varphi_n(x)\}_{n=1}^\infty$. Then the following properties will be valid: 
\begin{itemize}
    \item $0<\lambda_1\le \lambda_2\le\cdots$ and $\lambda_n\to\infty$ as $n\to \infty$;
    \item $\{\varphi_n(x)\}_{n=1}^\infty$ is an orthonormal basis of $L^2(\Omega)$. 
\end{itemize}
Furthermore, if $\varphi_n$ is an eigenfunction of $-\Delta$ corresponding to $\lambda_n$, so is $\overline{\varphi_n}$,  where $\overline{\varphi_n}$ is the complex conjugate of $\varphi_n$. Hence we have that the set $\{\varphi_n(x)\}_{n=1}^\infty$ coincides with $\{\overline{\varphi_n(x)}\}_{n=1}^\infty$. 
The trace theorem yields that   $\{\frac{\partial\varphi_n}{\partial\nu}|_{\partial \Omega}\}_{n=1}^\infty\subset H^{1/2}(\partial \Omega)$.
Also, we denote $\la\cdot,\cdot\rp$ as the inner product in $L^2(\partial\Omega)$. 

The next lemmas concern the vanishing property and the density of $\frac{\partial\varphi_n}{\partial\nu}$ on $\partial\Omega$. 
\begin{lemma}\label{nonempty_open}
If $\Gamma$ is a nonempty open subset of $\partial \Omega$, then for each $n\in\mathbb N^+$, $\frac{\partial \varphi_n}{\partial\nu}$ can not vanish on $\Gamma$. 
\end{lemma}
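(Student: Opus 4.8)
The plan is to argue by contradiction, combining the Robin boundary condition with unique continuation for the Helmholtz operator. Recall that each $\varphi_n$ is a nontrivial solution of $\Delta\varphi_n+\lambda_n\varphi_n=0$ in $\Omega$ satisfying the Robin condition $\frac{\partial\varphi_n}{\partial\nu}+\beta\varphi_n=0$ on $\partial\Omega$ with $\beta>0$. Suppose, contrary to the assertion, that $\frac{\partial\varphi_n}{\partial\nu}$ vanishes on the nonempty open set $\Gamma\subset\partial\Omega$.

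First I would exploit the boundary condition. On $\Gamma$ one has $\frac{\partial\varphi_n}{\partial\nu}=-\beta\varphi_n$, so the vanishing of $\frac{\partial\varphi_n}{\partial\nu}$ on $\Gamma$ forces $\varphi_n=0$ on $\Gamma$ as well, since $\beta>0$. Thus both components of the Cauchy data of $\varphi_n$ vanish on the open boundary patch $\Gamma$. This is the step that converts a one-sided hypothesis on the normal derivative into full Cauchy data, and it is where the positivity of $\beta$ is essential.

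Next I would invoke unique continuation from the boundary. One clean route is to extend $\varphi_n$ by zero across $\Gamma$ to a connected open set $\widetilde\Omega\supset\Omega$ that meets $\partial\Omega$ in a piece of $\Gamma$. Because both $\varphi_n$ and its normal derivative vanish on $\Gamma$, the zero-extension $\widetilde\varphi_n$ carries no distributional jump in its first derivatives across $\Gamma$, so $\widetilde\varphi_n$ is a weak solution of $\Delta\widetilde\varphi_n+\lambda_n\widetilde\varphi_n=0$ in $\widetilde\Omega$. By elliptic regularity $\widetilde\varphi_n$ is real-analytic (the coefficients being constant), and it vanishes on the nonempty open set $\widetilde\Omega\setminus\overline\Omega$. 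The identity theorem for real-analytic functions on the connected set $\widetilde\Omega$ then yields $\widetilde\varphi_n\equiv0$, hence $\varphi_n\equiv0$ in $\Omega$. Equivalently, since the operator has constant—hence analytic—coefficients, Holmgren's uniqueness theorem applied to the vanishing Cauchy data on $\Gamma$ gives the same conclusion directly.

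Finally, $\varphi_n\equiv0$ contradicts the fact that $\varphi_n$ is an eigenfunction and therefore nontrivial, which completes the argument. The delicate point is the unique continuation step: one must ensure enough boundary regularity (guaranteed by the smoothness of $\partial\Omega$ assumed throughout) for the Cauchy data to be well defined in the trace sense and for the zero-extension to qualify as a genuine weak solution across $\Gamma$. Granting this, the analyticity of solutions of the constant-coefficient Helmholtz equation makes the continuation conclusion essentially automatic.
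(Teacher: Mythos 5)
Your proof is correct and is the standard argument: the Robin condition converts the vanishing of $\frac{\partial\varphi_n}{\partial\nu}$ on $\Gamma$ into vanishing Cauchy data, and unique continuation (zero extension plus analyticity, or Holmgren) then forces $\varphi_n\equiv 0$, contradicting nontriviality. The paper itself gives no argument here --- it simply cites Lemma~2.1 of the reference \cite{LinZhangZhang:2021} --- and your reasoning is precisely the proof one expects that citation to contain, with the only implicit assumption worth noting being the connectedness of $\Omega$, which the lemma needs in any case.
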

\begin{proof}
 See \cite[Lemma 2.1]{LinZhangZhang:2021}.
\end{proof}

\begin{lemma}
 The set $\text{Span}\{\frac{\partial\varphi_n}{\partial\nu}|_{\partial \Omega}\}_{n=1}^\infty$ is dense in $L^2(\partial \Omega)$. 
\end{lemma}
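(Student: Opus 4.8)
The plan is to argue by duality. Since $L^2(\partial\Omega)$ is a Hilbert space, it suffices to show that the only $g\in L^2(\partial\Omega)$ orthogonal to every $\frac{\partial\varphi_n}{\partial\nu}\big|_{\partial\Omega}$ is $g\equiv 0$. So I would fix such a $g$, i.e.
\[
\la g,\tfrac{\partial\varphi_n}{\partial\nu}\rp=0\quad\text{for all }n,
\]
and aim to deduce $g=0$. The first observation is that each eigenfunction obeys the homogeneous Robin condition $\frac{\partial\varphi_n}{\partial\nu}+\beta\varphi_n=0$ on $\partial\Omega$, so that $\frac{\partial\varphi_n}{\partial\nu}=-\beta\,\varphi_n|_{\partial\Omega}$ with $\beta>0$; the orthogonality above is therefore equivalent to $\int_{\partial\Omega}g\,\varphi_n\,dS=0$ for every $n$.

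Next I would introduce an auxiliary boundary value problem driven by the datum $g$. Let $w$ be the weak solution of
\[
-\Delta w+w=0\ \text{ in }\ \Omega,\qquad \frac{\partial w}{\partial\nu}+\beta w=g\ \text{ on }\ \partial\Omega,
\]
characterized by seeking $w\in H^1(\Omega)$ with
\[
\int_\Omega\nabla w\cdot\nabla\psi\,dx+\int_\Omega w\psi\,dx+\beta\int_{\partial\Omega}w\psi\,dS=\int_{\partial\Omega}g\psi\,dS,\qquad\forall\,\psi\in H^1(\Omega).
\]
The bilinear form on the left is bounded and coercive on $H^1(\Omega)$, where the zeroth-order term $\int_\Omega w\psi$ together with $\beta>0$ makes coercivity immediate, while the right-hand side defines a bounded functional on $H^1(\Omega)$ because $g\in L^2(\partial\Omega)$ and the trace map is continuous from $H^1(\Omega)$ into $L^2(\partial\Omega)$. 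Lax--Milgram then yields a unique $w$.

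The heart of the argument is a reciprocity computation. Testing the weak formulation with $\psi=\varphi_n$, and comparing with the weak characterization of the eigenpair,
\[
\int_\Omega\nabla\varphi_n\cdot\nabla\psi\,dx+\beta\int_{\partial\Omega}\varphi_n\psi\,dS=\lambda_n\int_\Omega\varphi_n\psi\,dx,\qquad\forall\,\psi\in H^1(\Omega),
\]
used with $\psi=w$, the gradient and boundary integrals cancel and I expect to be left with
\[
(1+\lambda_n)\int_\Omega w\,\varphi_n\,dx=\int_{\partial\Omega}g\,\varphi_n\,dS=0.
\]
Since $1+\lambda_n>0$, this forces $\la w,\varphi_n\ro=0$ for every $n$; as $\{\varphi_n\}_{n=1}^\infty$ is an orthonormal basis of $L^2(\Omega)$, it follows that $w\equiv 0$. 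Feeding $w=0$ back into the weak formulation gives $\int_{\partial\Omega}g\psi\,dS=0$ for all $\psi\in H^1(\Omega)$, and since the traces of $H^1(\Omega)$ exhaust the dense subspace $H^{1/2}(\partial\Omega)\subset L^2(\partial\Omega)$, we conclude $g=0$, which is exactly the claimed density.

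The one point demanding care is the low boundary regularity: with $g$ only in $L^2(\partial\Omega)$, the classical Green identity is not directly available, so I would carry out the whole reciprocity step inside the variational framework above rather than through pointwise integration by parts. This keeps every pairing meaningful for $H^1$ solutions and makes the cancellation of the gradient and boundary terms rigorous. The remaining ingredients — coercivity of the Robin form, the self-adjointness encoded in the eigenfunction identity, and the completeness of $\{\varphi_n\}_{n=1}^\infty$ in $L^2(\Omega)$ stated above — are standard and already in hand.
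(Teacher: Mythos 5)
Your proof is correct, and it follows the same basic duality skeleton as the paper: take a boundary function orthogonal to every $\frac{\partial\varphi_n}{\partial\nu}$, feed it as Robin datum into an auxiliary elliptic problem, use a reciprocity identity against the eigenfunctions to show the auxiliary solution is orthogonal to the complete system $\{\varphi_n\}$ and hence vanishes, and then read off that the boundary datum is zero. The reciprocity computation itself checks out: subtracting the two weak formulations does give $(1+\lambda_n)\la w,\varphi_n\ro = \la g,\varphi_n\rp = 0$.

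Where you genuinely diverge from the paper is in the regularity bookkeeping, and your version is arguably cleaner. The paper cannot work with an arbitrary $g\in L^2(\partial\Omega)$ directly: it first reduces to data $\tilde\psi\in H^{3/2}(\partial\Omega)$ (dense in $L^2(\partial\Omega)$) precisely so that the auxiliary solution lies in $H^2(\Omega)$ and the classical Green identity
\[
\la -\Delta\psi,\varphi_n\ro-\la\psi,-\Delta\varphi_n\ro
=\la\psi,\tfrac{\partial\varphi_n}{\partial\nu}\rp-\la\tfrac{\partial\psi}{\partial\nu},\varphi_n\rp
\]
can be invoked pointwise, and it takes $-\Delta\psi=0$ as the auxiliary equation. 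You instead stay entirely at the $H^1$ variational level, which lets you handle $g\in L^2(\partial\Omega)$ in one pass without the density reduction, and you add the zeroth-order term $+w$ to make coercivity of the Robin form immediate rather than relying on a Friedrichs-type inequality; the only cost is the harmless factor $1+\lambda_n$ in place of $\lambda_n$. Your closing step (plug $w=0$ back into the weak formulation and use that $H^1$ traces are dense in $L^2(\partial\Omega)$) replaces the paper's appeal to continuity of the trace operator on $H^2(\Omega)$. Two small remarks: your observation that $\frac{\partial\varphi_n}{\partial\nu}=-\beta\varphi_n$ on $\partial\Omega$ is used implicitly by the paper as well, so it is not an extra hypothesis; and if the eigenfunctions are allowed to be complex (as the paper's setup suggests), you should either choose them real, which is possible for the self-adjoint Robin Laplacian, or carry conjugates through the sesquilinear pairings — this does not affect the conclusion.
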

\begin{proof}
 Not hard to see that $H^{3/2}(\partial\Omega)$ is dense in  $L^2(\partial \Omega)$ under the norm $\|\cdot\|_{L^2(\partial\Omega)}$. So it is sufficient to show $\tilde\psi\in H^{3/2}(\partial \Omega)$ vanishes almost everywhere on $\partial \Omega$ if $\la \tilde\psi,\frac{\partial\varphi_n}{\partial\nu}\rp=0$ for $n\in\mathbb{N}^+$.
 
We set $\psi$ be the weak solution of the system below:  
\begin{equation*}
\begin{cases}
\begin{aligned}
    -\Delta \psi(x)&=0, &&x\in \Omega,\\
    \frac{\partial \psi}{\partial\nu}+\beta\psi&=\tilde \psi,&&x\in\partial \Omega.
\end{aligned}
\end{cases}
\end{equation*}
We have $\psi\in H^2(\Omega)$ from the regularity $\tilde\psi\in H^{3/2}(\partial \Omega)$, sequentially the Green's identity can be used. For $n\in\mathbb N^+$, we have 
\begin{align*}
\la -\Delta \psi,\varphi_n\ro-\la \psi, -\Delta\varphi_n\ro 
&=\la \psi,\frac{\partial\varphi_n}{\partial\nu}\rp-\la\frac{\partial\psi}{\partial\nu}, \varphi_n\rp\\
&=\la \psi+\beta^{-1}\frac{\partial\psi}{\partial\nu},\frac{\partial\varphi_n}{\partial\nu}\rp\\
&=\beta^{-1}\la \tilde\psi,\frac{\partial\varphi_n}{\partial\nu}\rp.
\end{align*} 
From $\Delta \psi=0$ on $\Omega$ and the fact $\la \tilde\psi,\frac{\partial\varphi_n}{\partial\nu}\rp=0$, we have $$\la \psi,-\Delta\varphi_n\ro =\lambda_n\la \psi,\varphi_n\ro=0.$$ 
So we have proved that for each $n\in\mathbb N^+$, $\la\psi, \varphi_n\ro=0$. Recalling the completeness of $\{\varphi_n\}_{n=1}^\infty$ in $L^2(\Omega)$, it holds that  $\|\psi\|_{L^2(\Omega)}=0$. 
From the definition of weak derivative and Sobolev space, we have $\|\psi\|_{H^2(\Omega)}=0$. By the  continuity of the trace operator, it gives that 
$$\|\psi\|_{L^2(\partial \Omega)}\le C\|\psi\|_{H^2(\Omega)}=0,\quad 
\Big\|\frac{\partial\psi}{\partial\nu}\Big\|_{L^2(\partial\Omega)}
\le C\|\psi\|_{H^2(\Omega)}=0.$$
This means that $\tilde\psi=0$ almost everywhere on $\partial\Omega$ and the proof is complete. 
\end{proof}

\subsubsection{The set $\{\xi_l\}_{l=1}^\infty$ and the coefficients $\{c_{z,n}\}$.}
From the above lemma, we are allowed to construct the orthonormal basis $\{\tilde \xi_l\}_{l=1}^\infty$ in $L^2(\partial \Omega)$. Firstly we set $\tilde\xi_1=\frac{\partial\varphi_1}{\partial\nu}|_{\partial \Omega}/\|\frac{\partial\varphi_1}{\partial\nu}\|_{L^2(\partial \Omega)},$ and assume that the orthonormal set $\{\tilde\xi_j\}_{j=1}^{l-1}$ has been built for $l=2,3,\cdots$. Then we set $n_l\in\mathbb N^+$ be the smallest number such that  $\frac{\partial\varphi_{n_l}}{\partial\nu}|_{\partial \Omega}\notin \s\{\tilde\xi_j\}_{j=1}^{l-1}$, and pick $\tilde\xi_l\in \s\{\frac{\partial\varphi_{n_l}}{\partial\nu}|_{\partial \Omega},\ \tilde\xi_1, \cdots, \tilde\xi_{l-1}\}$ satisfying  
\begin{equation*}
 \la\tilde\xi_l, \tilde\xi_j\rp=0\ \text{for}\ j=1,\cdots, l-1,\ \text{and}\ \|\tilde\xi_l\|_{L^2(\partial \Omega)}=1.
\end{equation*}
The density of $\text{Span}\{\frac{\partial\varphi_n}{\partial\nu}|_{\partial \Omega}\}_{n=1}^\infty$ in $L^2(\partial \Omega)$ yields that $\{\tilde\xi_l\}_{l=1}^\infty$ is an orthonormal basis in $L^2(\partial \Omega)$. Also, we have 
$\tilde \xi_l\in H^{1/2}(\partial \Omega)$ for each $l\in\mathbb N^+$. 

Next, for $l\in \mathbb N^+$, we define $\xi_l\in H^1( \Omega)$ be the weak solution of the system: 
\begin{equation}\label{PDE_xi}
\begin{cases}
\begin{aligned}
  (-\mu_D\Delta+\mu_a)\xi_l(x)&=0, &&x\in \Omega,\\
    \frac{\partial \xi_l}{\partial\nu}+\beta\xi_l&=\tilde \xi_l,&&x\in\partial \Omega.
\end{aligned}
\end{cases}
\end{equation}
Fixing $z\in\partial \Omega$, we define the series 
$\psi_z^N\in H^1(\Omega)$ as 
\begin{equation}\label{psi_z^N}
 \psi_z^N(x)=\sum_{l=1}^N \tilde\xi_l(z)\overline{\xi_l(x)} , \ x\in  \Omega.
\end{equation}
The definition of coefficients $\{c_{z,n}\}$ is given in the next lemma. 

\begin{lemma}\label{c_z,n}
 For each $z\in\partial \Omega$ and $n\in \mathbb N^+$, $\lim_{N\to \infty}\la \psi_z^N,\overline{\varphi_n}\ro$ exists and we denote the limit by $c_{z,n}$.
\end{lemma}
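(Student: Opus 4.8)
The plan is to reduce the $L^2(\Omega)$ pairing of $\psi_z^N$ against $\overline{\varphi_n}$ to a pairing on $\partial\Omega$ via Green's identity, and then to exploit the Gram--Schmidt construction of $\{\tilde\xi_l\}$ to show that the resulting series in fact terminates. First I would unfold the definition \eqref{psi_z^N}. Since $\la\cdot,\cdot\ro$ is the $L^2(\Omega)$ inner product,
\[
\la \psi_z^N,\overline{\varphi_n}\ro=\int_\Omega\Big(\sum_{l=1}^N\tilde\xi_l(z)\,\overline{\xi_l(x)}\Big)\varphi_n(x)\,dx=\sum_{l=1}^N\tilde\xi_l(z)\int_\Omega\overline{\xi_l}\,\varphi_n\,dx,
\]
so everything reduces to evaluating $I_l:=\int_\Omega\overline{\xi_l}\,\varphi_n\,dx$. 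Before the computation I would note two regularity points: the pointwise numbers $\tilde\xi_l(z)$ are meaningful because each $\tilde\xi_l$ is a finite combination of traces $\frac{\partial\varphi_m}{\partial\nu}|_{\partial\Omega}$, which are smooth on the smooth boundary by elliptic regularity; and the datum $\tilde\xi_l\in H^{1/2}(\partial\Omega)$ yields $\xi_l\in H^2(\Omega)$, which legitimizes Green's identity.

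Next I would compute $I_l$ by Green's identity applied to $\overline{\xi_l}$ and $\varphi_n$. Using $\Delta\overline{\xi_l}=\tfrac{\mu_a}{\mu_D}\overline{\xi_l}$ (from \eqref{PDE_xi}, with $\mu_a,\mu_D$ real), $\Delta\varphi_n=-\lambda_n\varphi_n$, the Robin condition $\frac{\partial\varphi_n}{\partial\nu}=-\beta\varphi_n$, and $\frac{\partial\overline{\xi_l}}{\partial\nu}=\overline{\tilde\xi_l}-\beta\overline{\xi_l}$, the bulk terms collapse to a multiple of $I_l$ while the two boundary $\beta$-terms cancel, leaving
\[
-\Big(\lambda_n+\tfrac{\mu_a}{\mu_D}\Big)I_l=-\la\varphi_n,\tilde\xi_l\rp,\qquad\text{i.e.}\qquad I_l=\frac{1}{\lambda_n+\mu_a/\mu_D}\,\la\varphi_n,\tilde\xi_l\rp .
\]
The prefactor is harmless since $\lambda_n>0$ and $\mu_a,\mu_D>0$. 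Substituting back would give the clean expression
\[
\la \psi_z^N,\overline{\varphi_n}\ro=\frac{1}{\lambda_n+\mu_a/\mu_D}\sum_{l=1}^N\la\varphi_n,\tilde\xi_l\rp\,\tilde\xi_l(z).
\]

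The decisive step, and what I expect to be the only real obstacle, is the convergence of this sum: it is not automatic, since $\psi_z^N$ itself need not converge in $L^2(\Omega)$ (the coefficients $\tilde\xi_l(z)$ need not be square-summable), so Bessel-type bounds do not suffice. Here I would use the Robin condition once more to write $\varphi_n|_{\partial\Omega}=-\beta^{-1}\frac{\partial\varphi_n}{\partial\nu}|_{\partial\Omega}$, and then invoke the construction of $\{\tilde\xi_l\}$: at each stage $\s\{\tilde\xi_1,\dots,\tilde\xi_l\}=\s\{\frac{\partial\varphi_1}{\partial\nu}|_{\partial\Omega},\dots,\frac{\partial\varphi_{n_l}}{\partial\nu}|_{\partial\Omega}\}$ with $n_1<n_2<\cdots$. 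For fixed $n$, choosing $l_0$ with $n_{l_0}\ge n$ shows $\frac{\partial\varphi_n}{\partial\nu}|_{\partial\Omega}\in\s\{\tilde\xi_1,\dots,\tilde\xi_{l_0}\}$, so $\varphi_n|_{\partial\Omega}$ is a finite combination of $\tilde\xi_1,\dots,\tilde\xi_{l_0}$ and $\la\varphi_n,\tilde\xi_l\rp=0$ for every $l>l_0$. Hence the partial sums are constant once $N\ge l_0$, the limit exists, and
\[
c_{z,n}=\frac{1}{\lambda_n+\mu_a/\mu_D}\sum_{l=1}^{l_0}\la\varphi_n,\tilde\xi_l\rp\,\tilde\xi_l(z).
\]
The remaining items — the $H^2$-regularity justifying Green's identity and the smoothness making $\tilde\xi_l(z)$ a genuine scalar — are routine consequences of elliptic regularity on the smooth domain $\Omega$, so the heart of the argument is really the Green's-identity reduction followed by the finiteness coming from the Gram--Schmidt construction.
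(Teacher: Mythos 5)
Your proposal is correct and follows essentially the same route as the paper: Green's identity converts $\la\psi_z^N,\overline{\varphi_n}\ro$ into a boundary pairing, and the Gram--Schmidt construction of $\{\tilde\xi_l\}$ forces $\la\frac{\partial\varphi_n}{\partial\nu},\tilde\xi_l\rp=0$ for all large $l$, so the partial sums stabilize. The only differences are cosmetic --- you apply Green's identity term by term to each $\xi_l$ rather than to the finite sum $\psi_z^N$, and your final formula pairs against $\varphi_n$ instead of $\frac{\partial\varphi_n}{\partial\nu}$, which agrees with the paper's $c_{z,n}^N$ via the Robin condition $\frac{\partial\varphi_n}{\partial\nu}=-\beta\varphi_n$.
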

\begin{proof}
 From Green's identities we have 
 \begin{equation*}
 \begin{aligned}
  \la \psi_z^N,\overline{\varphi_n}\ro&=(\mu_D\lambda_n+\mu_a)^{-1}\la\psi_z^N, (-\mu_D\Delta+\mu_a)\overline{\varphi_n}\ro \\
  &=(\mu_D\lambda_n+\mu_a)^{-1}\Big( \la (-\mu_D\Delta+\mu_a)\psi_z^N,\overline{\varphi_n}\ro-\mu_D\la\psi_z^N,\frac{\partial\overline{\varphi_n}}{\partial\nu}\rp\\
  &\qquad\qquad\qquad\qquad\  +\mu_D\la\frac{\partial\psi_z^N}{\partial\nu},\overline{\varphi_n}\rp\Big) \\
  &=-(\mu_D\lambda_n+\mu_a)^{-1}\mu_D\beta^{-1}\la\frac{\partial\psi_z^N}{\partial\nu}+\beta\psi_z^N,\frac{\partial\overline{\varphi_n}}{\partial\nu}\rp\\
  &=-(\mu_D\lambda_n+\mu_a)^{-1}\mu_D\beta^{-1}\sum_{l=1}^N \tilde\xi_l(z)\la\frac{\partial\varphi_n}{\partial\nu}, \tilde\xi_l\rp=:c_{z,n}^N,
  \end{aligned}
 \end{equation*}
where the system \eqref{PDE_xi} and the boundary condition of $\varphi_n$ are used. From the definition of $\{\tilde\xi_l\}_{l=1}^\infty$, we have                 $\la\frac{\partial\varphi_n}{\partial\nu}, \tilde\xi_l\rp=0$ for large $l$. Hence the value of $c_{z,n}^N$ will not change if $N$ is sufficiently large. This gives that $\lim_{N\to \infty}\la \psi_z^N,\overline{\varphi_n}\ro$ exists and the proof is complete. 
\end{proof}

From the above lemma we can see 
$$c_{z,n}=\lim_{N\to \infty}c_{z,n}^N=\lim_{N\to \infty}\la \psi_z^N,\overline{\varphi_n}\ro,$$ 
also we denote $p_{k,n}:=\la p_k(\cdot),\varphi_n(\cdot)\ro$ for $k=1,\cdots, K$ and $n\in\mathbb N^+$. For the coefficients $\{c_{z,n},c_{z,n}^N, p_{k,n}\}$, we give the following conditions, which will be used in the future proof. 
\begin{assumption}\label{condition_p_regularity}
\hfill
\begin{itemize}
\item [(a)] For $k\in\{1,\cdots,K\}$ and a.e. $z\in\partial\Omega$, we can find $C>0$ which is independent of $N$ such that $\sum_{n=1}^\infty |c^N_{z,n}p_{k,n}|<C<\infty$ for $N\in\mathbb N^+$. 
\item [(b)] $\sum_{k=1}^K\sum_{n=1}^\infty |c_{z,n}p_{k,n}|<\infty$ for a.e. $z\in \partial \Omega$.
\end{itemize}
\end{assumption}

\subsubsection{Auxiliary lemmas from \cite{LinZhangZhang:2021}.}
At the end of this section, we collect some auxiliary results from the reference \cite{LinZhangZhang:2021}. 
\begin{lemma}\label{data_formula}
Assuming that $\{p_k(x)\}_{k=1}^K$ possess appropriate regularities, then for a.e. $z\in\partial \Omega$ and a.e.  $t\in(0,\infty)$, it holds that 
\begin{equation*}
 -\int_0^t \frac{\partial u}{\partial\nu}(z, \tau)\ d\tau
 =\int_0^t \sum_{k=1}^K \chi_{{}_{t-\tau\in [t_{k-1},t_k)}} 
 \Big[\sum_{n=1}^\infty c_{z,n}p_{k,n} 
 (1-e^{-(\mu_D\lambda_n+\mu_a)\tau})\Big]\ d\tau.
 \end{equation*}
 \end{lemma}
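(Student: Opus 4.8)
The plan is to reduce the time‑integrated boundary flux to an elliptic problem, and then to handle the (ill‑defined) pointwise normal trace through exactly the regularized objects $\psi_z^N$ and the coefficients $c_{z,n}$ built in Lemma \ref{c_z,n}. First I would introduce the time‑integrated state $U(x,t):=\int_0^t u(x,\tau)\,d\tau$, so that the left‑hand side is precisely $-\frac{\partial U}{\partial\nu}(z,t)$. Integrating the equation \ref{emission-um} in time and using $u(\cdot,0)=0$ shows that $U(\cdot,t)$ solves $(-\mu_D\Delta+\mu_a)U=\int_0^t S_0(\cdot,\tau)\,d\tau-u(\cdot,t)=:f$ in $\Omega$, together with the homogeneous Robin condition $\frac{\partial U}{\partial\nu}+\beta U=0$ inherited from $u$. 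The Robin condition then turns the desired flux into a boundary trace, $-\frac{\partial U}{\partial\nu}(z,t)=\beta U(z,t)$, so it suffices to represent $U(z,t)$.

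Next I would recover $U$ at the point $z$ via the orthonormal basis $\{\tilde\xi_l\}$ of $L^2(\partial\Omega)$ and the auxiliary solutions $\xi_l$ of \ref{PDE_xi}. Pairing the elliptic identity for $U$ against $\overline{\xi_l}$ and applying Green's identity, the interior term $\la U,(-\mu_D\Delta+\mu_a)\overline{\xi_l}\ro$ vanishes because $\xi_l$ is $(-\mu_D\Delta+\mu_a)$‑harmonic, while the two boundary terms collapse—using $\frac{\partial\overline{\xi_l}}{\partial\nu}+\beta\overline{\xi_l}=\overline{\tilde\xi_l}$ from \ref{PDE_xi} and $\frac{\partial U}{\partial\nu}=-\beta U$—to the single pairing $\mu_D\la U(\cdot,t),\tilde\xi_l\rp$. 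This gives $\la U(\cdot,t),\tilde\xi_l\rp=\mu_D^{-1}\la f,\overline{\xi_l}\ro$. Expanding the trace of $U$ in $\{\tilde\xi_l\}$ and evaluating at $z$ converts $U(z,t)$ into $\mu_D^{-1}\la f,\psi_z^N\ro$ in the limit $N\to\infty$; writing $f=\sum_n f_n\varphi_n$ with $f_n=\la f,\varphi_n\ro$ and invoking Lemma \ref{c_z,n} then expresses $U(z,t)$ as the series $\mu_D^{-1}\sum_n c_{z,n}f_n$.

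It remains to identify $f_n$. Expanding $u=\sum_n\hat u_n(t)\varphi_n$ and projecting \ref{emission-um} onto $\varphi_n$ (self‑adjointness removes the boundary contribution, since both $u$ and $\varphi_n$ carry the homogeneous Robin condition) leaves the scalar ODE $\hat u_n'+(\mu_D\lambda_n+\mu_a)\hat u_n=\sum_k p_{k,n}\chi_{{}_{t\in[t_{k-1},t_k)}}$ with $\hat u_n(0)=0$, solved by Duhamel. Since $f_n=\int_0^t\la S_0(\cdot,\tau),\varphi_n\ro\,d\tau-\hat u_n(t)$, substituting the Duhamel formula and performing the change of variables $\tau\mapsto t-\tau$ produces exactly $f_n=\sum_k p_{k,n}\int_0^t\chi_{{}_{t-\tau\in[t_{k-1},t_k)}}\bigl(1-e^{-(\mu_D\lambda_n+\mu_a)\tau}\bigr)\,d\tau$; here the factor $1-e^{-(\mu_D\lambda_n+\mu_a)\tau}$ and the shifted indicator $\chi_{{}_{t-\tau\in[t_{k-1},t_k)}}$ both arise from the kernel $1-e^{-(\mu_D\lambda_n+\mu_a)(t-s)}$ after the substitution. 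Inserting this into the series for $U(z,t)$, reconciling the scalar prefactors through the explicit form of $c_{z,n}$ and the Robin relation, and interchanging the orders of $\sum_n$, $\sum_k$ and $\int_0^t$ yields the asserted formula.

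The main obstacle is analytic rather than algebraic: the pointwise normal trace $\frac{\partial\varphi_n}{\partial\nu}(z)$ is not meaningful in $L^2(\partial\Omega)$, so every ``evaluation at $z$'' above must be read through the regularized kernels $\psi_z^N$ and the limit defining $c_{z,n}$, and the formal rearrangements—interchanging the spatial series over $n$ with the boundary‑trace reconstruction, with $\lim_{N\to\infty}$, and with the time integral—have to be justified by absolute/dominated convergence. This is exactly the purpose of Assumption \ref{condition_p_regularity}: part (a) supplies the $N$‑uniform bound $\sum_n|c^N_{z,n}p_{k,n}|<C$ licensing the passage $N\to\infty$ inside the sum, while part (b) gives $\sum_k\sum_n|c_{z,n}p_{k,n}|<\infty$, which is what allows Fubini between the series and $\int_0^t$. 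The unspecified ``appropriate regularities'' on the $p_k$ are precisely what guarantee $f\in H^2(\Omega)$, so that the traces and the Green's identity invoked in the second step are legitimate.
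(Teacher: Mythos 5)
The paper does not actually prove this lemma: its ``proof'' is a one-line citation of \cite{LinZhangZhang:2021}, so there is no in-house argument to compare yours against. Your proposal supplies the derivation that the citation hides, and it uses precisely the machinery the paper sets up for that purpose: the time-integrated state $U(\cdot,t)=\int_0^t u\,d\tau$, which solves $(-\mu_D\Delta+\mu_a)U=\int_0^t S_0\,d\tau-u(\cdot,t)=:f$ with the homogeneous Robin condition so that the left-hand side becomes $\beta U(z,t)$; the auxiliary solutions $\xi_l$ of \ref{PDE_xi} and the kernels $\psi_z^N$ of \ref{psi_z^N} to give meaning to boundary evaluation at $z$ through the limit defining $c_{z,n}$; the eigenfunction expansion plus Duhamel, which correctly identifies $\la f,\varphi_n\ro=\sum_k p_{k,n}\int_0^t\chi_{{}_{t-\tau\in[t_{k-1},t_k)}}\bigl(1-e^{-(\mu_D\lambda_n+\mu_a)\tau}\bigr)\,d\tau$ after the substitution $\tau\mapsto t-\tau$; and Assumption \ref{condition_p_regularity} to license the exchanges of $\sum_n$, $\lim_{N\to\infty}$ and $\int_0^t$. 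This is structurally sound and is, as far as one can tell, the intended proof behind the cited corollary.

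One piece of bookkeeping you should not wave away. Carried out literally, your chain gives $-\int_0^t\frac{\partial u}{\partial\nu}(z,\tau)\,d\tau=\beta U(z,t)$ and $\la U(\cdot,t),\tilde\xi_l\rp=\mu_D^{-1}\la f,\xi_l\ro$, hence a left-hand side equal to $\beta\mu_D^{-1}\sum_n c_{z,n}\la f,\varphi_n\ro$, whereas the stated identity has $\sum_n c_{z,n}\la f,\varphi_n\ro$. With the paper's normalization of $c_{z,n}$ from Lemma \ref{c_z,n} (which already carries the factor $\mu_D\beta^{-1}$), the prefactors do not cancel to $1$ but to $\beta/\mu_D$; a formal termwise check via $\frac{\partial\varphi_n}{\partial\nu}=-\beta\varphi_n$ gives $c_{z,n}=(\mu_D\lambda_n+\mu_a)^{-1}\mu_D\varphi_n(z)$ while the left-hand side expands with weight $(\mu_D\lambda_n+\mu_a)^{-1}\beta\varphi_n(z)$. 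This constant is harmless for the uniqueness argument downstream and may simply reflect a normalization mismatch with the cited source, but ``reconciling the scalar prefactors'' is exactly the step where your sketch is not yet a proof of the displayed equality, so you should track it explicitly rather than assert that it closes.
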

\begin{proof}
 This is \cite[Corollary 3.2]{LinZhangZhang:2021}. 
\end{proof}

The next two lemmas are included in \cite[Section 3.3]{LinZhangZhang:2021}. 
\begin{lemma}\label{lemma_uniqueness_1}
We denote the set of distinct eigenvalues with increasing order by $\{\lambda_j\}_{j=1}^\infty$.  
For any nonempty open subset $\Gamma\subset\partial\Omega$, if $$\sum_{\lambda_n=\lambda_j} c_{z,n}\eta_n=0\ \text{for}\  j\in\mathbb{N}^+\ \text{and\ a.e.}\  z\in\Gamma,$$ 
then $\{\eta_n\}_{n=1}^\infty=\{0\}$. 
\end{lemma}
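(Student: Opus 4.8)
The plan is to first reduce the auxiliary coefficients $c_{z,n}$ to a transparent pointwise expression that eliminates the basis $\{\tilde\xi_l\}$, and then to recast the hypothesis as the vanishing of the normal derivative of an eigenfunction on $\Gamma$, where the unique continuation principle behind Lemma \ref{nonempty_open} applies. The first step starts from the formula obtained inside the proof of Lemma \ref{c_z,n}, namely
$$c_{z,n}=\lim_{N\to\infty}c_{z,n}^N=-(\mu_D\lambda_n+\mu_a)^{-1}\mu_D\beta^{-1}\sum_{l=1}^\infty \tilde\xi_l(z)\la\frac{\partial\varphi_n}{\partial\nu},\tilde\xi_l\rp,$$
where the series is in fact a finite sum: by the Gram--Schmidt construction of $\{\tilde\xi_l\}$, each $\frac{\partial\varphi_n}{\partial\nu}$ lies in the span of finitely many $\tilde\xi_l$, so $\la\frac{\partial\varphi_n}{\partial\nu},\tilde\xi_l\rp=0$ for all large $l$. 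Since $\{\tilde\xi_l\}_{l=1}^\infty$ is an orthonormal basis of $L^2(\partial\Omega)$, this finite expansion reproduces $\frac{\partial\varphi_n}{\partial\nu}$ exactly, and evaluating at $z$ yields the key identity
$$c_{z,n}=-(\mu_D\lambda_n+\mu_a)^{-1}\mu_D\beta^{-1}\,\frac{\partial\varphi_n}{\partial\nu}(z)\quad\text{for a.e. } z\in\partial\Omega.$$

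Next I would substitute this identity into the hypothesis. For a fixed distinct eigenvalue $\lambda_j$, every index $n$ with $\lambda_n=\lambda_j$ carries the same nonzero factor $-(\mu_D\lambda_j+\mu_a)^{-1}\mu_D\beta^{-1}$, which factors out; hence $\sum_{\lambda_n=\lambda_j}c_{z,n}\eta_n=0$ becomes
$$\sum_{\lambda_n=\lambda_j}\eta_n\,\frac{\partial\varphi_n}{\partial\nu}(z)=0\quad\text{for a.e. } z\in\Gamma.$$
Setting $\Psi_j:=\sum_{\lambda_n=\lambda_j}\eta_n\varphi_n$, a finite sum because each eigenspace is finite-dimensional, this says $\frac{\partial\Psi_j}{\partial\nu}=0$ a.e. on $\Gamma$, and $\Psi_j$ is itself an eigenfunction of $-\Delta$ for $\lambda_j$ satisfying the Robin condition.

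Finally I would invoke unique continuation. Combining $\frac{\partial\Psi_j}{\partial\nu}=0$ on $\Gamma$ with the Robin condition $\frac{\partial\Psi_j}{\partial\nu}+\beta\Psi_j=0$ forces $\Psi_j=0$ on $\Gamma$ too, so $\Psi_j$ has vanishing Cauchy data on the nonempty open set $\Gamma$. The unique continuation principle for $-\Delta\Psi_j=\lambda_j\Psi_j$ — the same mechanism underlying Lemma \ref{nonempty_open}, now applied to the eigenspace combination $\Psi_j$ rather than to a single basis function — then gives $\Psi_j\equiv0$ in $\Omega$. By orthonormality of the eigenfunctions, all coefficients vanish, i.e. $\eta_n=0$ whenever $\lambda_n=\lambda_j$; letting $\lambda_j$ range over all distinct eigenvalues yields $\{\eta_n\}_{n=1}^\infty=\{0\}$.

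The main obstacle is the first step: recognizing that the limit defining $c_{z,n}$ collapses, through the orthonormal-basis expansion, into the clean pointwise multiple of $\frac{\partial\varphi_n}{\partial\nu}(z)$. Once this identification is in hand the remainder is essentially the unique continuation already encapsulated in Lemma \ref{nonempty_open}. A minor technical point to keep in mind is that $\tilde\xi_l$, and hence $c_{z,n}$ and $\frac{\partial\varphi_n}{\partial\nu}$, admit pointwise values only for a.e.\ $z$; this is harmless, since both the hypothesis and the conclusion are stated for a.e.\ $z\in\Gamma$.
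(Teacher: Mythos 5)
Your proof is correct and, as far as one can compare, takes the intended route: the paper itself offers no proof of this lemma (it defers to \cite[Section 3.3]{LinZhangZhang:2021}), and your two key moves --- collapsing the finite orthonormal expansion in $c_{z,n}^N$ to the identity $c_{z,n}=-(\mu_D\lambda_n+\mu_a)^{-1}\mu_D\beta^{-1}\frac{\partial\varphi_n}{\partial\nu}(z)$, then applying unique continuation with vanishing Cauchy data on $\Gamma$ to the eigenspace combination $\Psi_j$ --- are exactly what the construction of $\{\tilde\xi_l\}$ and $c_{z,n}$ is designed to enable. You are also right to invoke the unique continuation principle directly for $\Psi_j$ rather than Lemma \ref{nonempty_open} itself, since that lemma as stated covers only individual basis eigenfunctions and not arbitrary combinations within a (possibly multi-dimensional) eigenspace.
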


\begin{lemma}
\label{lemma_uniqueness} 
Let $\Gamma$ be a nonempty open subset of $\partial\Omega$ and $\sum_{n=1}^\infty c_{z,n}\eta_n$ be absolutely convergent for a.e. $z\in \Gamma$. Given $\epsilon>0$, then  
\begin{equation*}
\label{eq-exp_eigen}
\lim_{\re s\to\infty}e^{\epsilon s} \sum_{n=1}^\infty c_{z,n}\eta_n(\mu_D\lambda_n+\mu_a)(s+\mu_D\lambda_n+\mu_a)^{-1}=0\ \text{for a.e.}\ z\in\Gamma 
\end{equation*}
leads to $\{\eta_n\}_{n=1}^\infty=\{0\}$. 
\end{lemma}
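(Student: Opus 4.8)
The plan is to read the displayed series as a Laplace transform in a hidden time variable and to turn the super-exponential decay hypothesis into a support statement for the inverse transform. Write $\mu_n:=\mu_D\lambda_n+\mu_a$, so that $0<\mu_1\le\mu_2\le\cdots\to\infty$, and set $A_z:=\sum_{n=1}^\infty|c_{z,n}\eta_n|$, which is finite for a.e. $z\in\Gamma$ by hypothesis. Fixing such a $z$, abbreviate $F_z(s):=\sum_{n=1}^\infty c_{z,n}\eta_n\,\mu_n(s+\mu_n)^{-1}$, which converges absolutely, is analytic on $\{\re s>0\}$, and satisfies $|F_z(s)|\le A_z$ on $\{\re s\ge0\}$. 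By Lemma~\ref{lemma_uniqueness_1} it suffices to prove $\sum_{\lambda_n=\lambda_j}c_{z,n}\eta_n=0$ for every distinct eigenvalue $\lambda_j$ and a.e. $z\in\Gamma$; this reduces the whole lemma to a uniqueness statement for a generalized Dirichlet series.

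First I would introduce the bounded continuous function $h_z(t):=\sum_{n=1}^\infty c_{z,n}\eta_n(1-e^{-\mu_n t})$ on $[0,\infty)$; absolute convergence gives $|h_z|\le A_z$, continuity, and $h_z(0)=0$. A term-by-term computation shows $\mathcal{L}[h_z](s)=F_z(s)/s$ for $\re s>0$, and, integrating once, $H_z(t):=\int_0^t h_z(\tau)\,d\tau$ obeys $\mathcal{L}[H_z](s)=F_z(s)/s^2$. The purpose of this integration is purely to gain decay: on each vertical line $\re s=\gamma>0$ the transform is now $O(|s|^{-2})$, so $H_z$ is recovered by the absolutely convergent Bromwich integral $H_z(t)=\frac{1}{2\pi i}\int_{\re s=\gamma}e^{st}F_z(s)s^{-2}\,ds$ at every $t>0$.

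Next I would exploit the hypothesis. For $0\le t<\epsilon$, factor $e^{st}F_z(s)s^{-2}=e^{(t-\epsilon)s}\,[e^{\epsilon s}F_z(s)]\,s^{-2}$ in the Bromwich integral. For a fixed large $\gamma$, Cauchy's theorem lets me shift the contour from $\re s=\gamma_0$ up to $\re s=\gamma$, the horizontal pieces vanishing as their height grows thanks to the $s^{-2}$ factor and the crude bound $|F_z|\le A_z$. On the line $\re s=\gamma$ the integrand is dominated by $e^{(t-\epsilon)\gamma}M(\gamma)|s|^{-2}$, where $M(\gamma):=\sup_{\im s}|e^{\epsilon s}F_z(s)|$, so that $|H_z(t)|\le\tfrac12\,e^{(t-\epsilon)\gamma}M(\gamma)/\gamma$. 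Reading $\lim_{\re s\to\infty}e^{\epsilon s}F_z(s)=0$ as a limit uniform in $\im s$ gives $M(\gamma)\to0$, while $e^{(t-\epsilon)\gamma}\le1$ for $t\le\epsilon$; letting $\gamma\to\infty$ forces $H_z(t)=0$ on $[0,\epsilon)$, hence $h_z\equiv0$ on $(0,\epsilon)$.

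Finally, since $h_z$ is real-analytic on $(0,\infty)$ (the restriction of a function holomorphic on $\{\re t>0\}$), its vanishing on $(0,\epsilon)$ propagates to $h_z\equiv0$ on $(0,\infty)$. Sending $t\to\infty$ yields $\sum_n c_{z,n}\eta_n=0$, whence $\sum_n c_{z,n}\eta_n e^{-\mu_n t}=0$ for all $t>0$; grouping this absolutely convergent series by distinct eigenvalues and peeling the slowest exponential (multiply by $e^{\mu_1 t}$, let $t\to\infty$, then iterate) gives $\sum_{\lambda_n=\lambda_j}c_{z,n}\eta_n=0$ for every $j$ and a.e. $z\in\Gamma$, and Lemma~\ref{lemma_uniqueness_1} closes the argument. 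The hard part will be the third step: converting the one-sided, real-part limit into vanishing of $H_z$ on $(0,\epsilon)$. The two delicate points there are securing enough decay on vertical lines to legitimize the Bromwich inversion and the contour shift (handled by the extra integration), and, crucially, using the uniformity in $\im s$ encoded in $\lim_{\re s\to\infty}$ — the naive estimate $|F_z|\le A_z$ is worthless here, being overwhelmed by the $e^{\epsilon\gamma}$ growth, so the cancellation asserted in the hypothesis must enter in an essential way.
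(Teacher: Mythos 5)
Your proposal is correct, and it is a genuine proof rather than the paper's: the paper does not prove Lemma \ref{lemma_uniqueness} at all, but only cites \cite[Section 3.3]{LinZhangZhang:2021}, so a self-contained argument like yours is exactly what a reader would want. The route you take — recognizing $F_z(s)/s^2$ as the Laplace transform of $H_z(t)=\int_0^t\sum_n c_{z,n}\eta_n(1-e^{-\mu_n\tau})\,d\tau$, gaining the $|s|^{-2}$ decay needed for an absolutely convergent Bromwich integral, converting the hypothesis into $H_z\equiv0$ on $(0,\epsilon)$, extending by analyticity, and then peeling the resulting Dirichlet series before invoking Lemma \ref{lemma_uniqueness_1} — is the natural one and, as far as I can tell, matches the cited source in spirit; the final peeling step is essentially Lemma \ref{lemma_uniqueness_2}, which you could cite instead of redoing.

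The one point you rightly single out as delicate is the reading of $\lim_{\re s\to\infty}$ as uniform in $\im s$, i.e.\ $\sup_{\re s=\gamma}|e^{\epsilon s}F_z(s)|\to0$. This reading is the correct one here: in the only place the lemma is applied (the proof of Theorem \ref{uniqueness}(\romannumeral 1)), every term on the right of \eqref{equality_4} is bounded by a quantity depending on $\re s$ alone, such as $2e^{(\epsilon+t_0-t_1)\re s}\sum_k\sum_n|c_{z,n}p_{k,n}|$, so the hypothesis is in fact supplied uniformly over vertical lines and your estimate $|H_z(t)|\le e^{(t-\epsilon)\gamma}M(\gamma)/(2\gamma)$ closes as claimed. (Had the limit been meant only along the real axis, you would instead need a Phragm\'en--Lindel\"of step to propagate the decay to vertical lines; it is worth stating explicitly that you avoid this.) The remaining steps are all sound: $|1-e^{-\mu_n z}|\le2$ for $\re z>0$ gives the holomorphic extension of $h_z$ justifying the identity-theorem step, and dominated convergence justifies both $h_z(t)\to\sum_n c_{z,n}\eta_n$ as $t\to\infty$ and the term-by-term peeling of $\sum_n c_{z,n}\eta_n e^{-\mu_n t}=0$ grouped by distinct eigenvalues.
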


\section{The proof of the uniqueness theorem.}\label{sec_uni} 
\subsection{The Laplace transform analysis.}
The convolution structure in the result of Lemma \ref{data_formula} 
encourages us to apply the Laplace transform, which is defined as 
\begin{equation*}
 \mathcal L\{\psi(t)\}(s) =\int_0^\infty e^{-st} \psi(t)\ dt,
 \quad s\in\mathbb C.
\end{equation*} 
Not hard to see that for $\re s>0$,  
\begin{equation*}
 \mathcal L \{1-e^{-(\mu_D\lambda_n+\mu_a)t}\}(s)=(\mu_D\lambda_n+\mu_a)s^{-1}(s+\mu_D\lambda_n+\mu_a)^{-1}.
\end{equation*}
From Assumption \ref{condition_p_regularity}, it holds that 
\begin{equation*}
\begin{aligned}
&\Big|\int_0^t \sum_{k=1}^K \chi_{{}_{t-\tau\in [t_{k-1},t_k)}} 
 \Big[\sum_{n=1}^\infty c_{z,n}p_{k,n} 
 (1-e^{-(\mu_D\lambda_n+\mu_a)\tau})\Big]\ d\tau\Big|\\
&\le 2\int_0^t \sum_{k=1}^{K} \big|\chi_{{}_{t-\tau\in[t_{k-1},t_k)}}\big|\ \sum_{n=1}^\infty \big|c_{z,n}p_{k,n}\big|\ d\tau\le  Ct.
 \end{aligned}
\end{equation*}
Also we can see $|e^{-st}t|$ is integrable on $(0,\infty)$ if $\re s>0$. Then by the Dominated Convergence Theorem, taking Laplace transform on the result in Lemma \ref{data_formula} yields that for each $s\in\mathbb C^+:=\{s\in\mathbb C:\re s> 0\},$
\begin{align*}
&\mathcal L \left\{-\int_0^t \frac{\partial u}{\partial\nu}(z,\tau)\ d\tau \right\}(s)\\
&=\sum_{k=1}^{K}\sum_{n=1}^\infty\int_0^\infty e^{-st} \int_0^t  \chi_{{}_{t-\tau\in[t_{k-1},t_k)}} c_{z,n}p_{k,n}(1-e^{-(\mu_D\lambda_n+\mu_a)\tau})\ d\tau\ dt\\
&=s^{-2}\sum_{k=1}^{K}(e^{-t_{k-1}s}-e^{-t_ks})\Big[\sum_{n=1}^\infty c_{z,n}p_{k,n}(\mu_D\lambda_n+\mu_a)(s+\mu_D\lambda_n+\mu_a)^{-1}\Big],
\end{align*}
which with $\mathcal L (\int\psi)=s^{-1}\mathcal L(\psi)$ implies that for $s\in\mathbb C^+$, 
\begin{equation}\label{laplace}
s\mathcal L \left\{ -\frac{\partial u}{\partial\nu}(z,t) \right\}(s)
=\sum_{k=1}^{K}(e^{-t_{k-1}s}-e^{-t_ks})\Big[\sum_{n=1}^\infty c_{z,n}p_{k,n}(\mu_D\lambda_n+\mu_a)(s+\mu_D\lambda_n+\mu_a)^{-1}\Big].
\end{equation}
After deducing \eqref{laplace}, we need to show the well-definedness and analyticity of the complex series in it. In the next lemma, we recall that the distinct eigenvalues with increasing order are denoted by $\{\lambda_j\}_{j=1}^\infty$. 

\begin{lemma}\label{analytic}
 Under Assumption \ref{condition_p_regularity}, the series  $$\sum_{k=1}^{K}(e^{-t_{k-1}s}-e^{-t_ks})\Big[\sum_{n=1}^\infty c_{z,n}p_{k,n}(\mu_D\lambda_n+\mu_a)(s+\mu_D\lambda_n+\mu_a)^{-1}\Big]$$ is analytic on $\mathbb C^+$. 
\end{lemma}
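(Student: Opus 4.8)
The plan is to realize the series as an absolutely and uniformly convergent series of functions that are each holomorphic on $\mathbb{C}^+$, and then invoke the Weierstrass theorem that a locally uniform limit of holomorphic functions is holomorphic. Throughout, abbreviate $\mu_n := \mu_D\lambda_n + \mu_a$, which is strictly positive since $\mu_D,\mu_a>0$ and $\lambda_n \ge \lambda_1 > 0$, and write the general term of the double series as
\[
g_{k,n}(s) := (e^{-t_{k-1}s} - e^{-t_k s})\, c_{z,n}\, p_{k,n}\, \frac{\mu_n}{s+\mu_n}.
\]

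First I would verify that each $g_{k,n}$ is analytic on $\mathbb{C}^+$: the exponential factor is entire, while the rational factor $\mu_n/(s+\mu_n)$ has its only pole at $s=-\mu_n<0$, which lies outside the open right half-plane. Next I would establish a bound uniform in $s$ over all of $\mathbb{C}^+$. For $\re s>0$ one has $|s+\mu_n|\ge \re(s+\mu_n)=\re s+\mu_n\ge\mu_n$, hence $|\mu_n/(s+\mu_n)|\le 1$; and since the mesh points are nonnegative and increasing, $|e^{-t_{k-1}s}|,\,|e^{-t_k s}|\le 1$ on $\mathbb{C}^+$, so $|e^{-t_{k-1}s}-e^{-t_k s}|\le 2$. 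Combining these gives $|g_{k,n}(s)|\le 2\,|c_{z,n}p_{k,n}|$ for every $s\in\mathbb{C}^+$.

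The majorant $2|c_{z,n}p_{k,n}|$ is independent of $s$ and, by part (b) of Assumption \ref{condition_p_regularity}, summable over the pair $(k,n)$ for a.e.\ $z\in\partial\Omega$, namely $\sum_{k=1}^{K}\sum_{n=1}^\infty 2|c_{z,n}p_{k,n}|<\infty$. By the Weierstrass M-test the double series $\sum_{k,n} g_{k,n}(s)$ then converges absolutely and uniformly on $\mathbb{C}^+$; absolute convergence lets me identify its sum, under any reindexing, with the iterated sum appearing in the statement. Since the partial sums are finite sums of functions analytic on $\mathbb{C}^+$, and since uniform convergence on $\mathbb{C}^+$ entails uniform convergence on every compact subset, the Weierstrass theorem yields that the limit is analytic on $\mathbb{C}^+$.

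I expect no serious obstacle here; the single decisive estimate is $|\mu_n/(s+\mu_n)|\le 1$ on the right half-plane, which furnishes an $s$-independent bound valid up to the imaginary axis and thereby reduces the whole question to the summability already granted by Assumption \ref{condition_p_regularity}. The only point that calls for slight care is the passage between the iterated sum written in the statement and the double series to which the M-test is applied, which is legitimate precisely because of the absolute convergence established above.
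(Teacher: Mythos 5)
Your proof is correct and follows essentially the same route as the paper: the decisive estimate $|(\mu_D\lambda_n+\mu_a)(s+\mu_D\lambda_n+\mu_a)^{-1}|\le 1$ on $\mathbb C^+$ together with $|e^{-t_{k-1}s}-e^{-t_ks}|\le 2$ and the summability from Assumption (b) is exactly what the paper uses to get uniform convergence, and the conclusion then follows from the Weierstrass theorem. The only difference is cosmetic: you treat the double series in one pass via the M-test, whereas the paper first establishes (local uniform) convergence of the inner series on the larger set $\mathbb C\setminus\{-\mu_D\lambda_j-\mu_a\}_{j=1}^\infty$ before summing over $k$, which is more than the stated lemma requires.
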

\begin{proof}
Firstly, let us show the analyticity of the series $\sum_{n=1}^\infty c_{z,n}p_{k,n}(\mu_D\lambda_n+\mu_a)(s+\mu_D\lambda_n+\mu_a)^{-1}$ on $\mathbb C\setminus \{-\mu_D\lambda_j-\mu_a\}_{j=1}^\infty$ for $k=1,\cdots,K$. Obviously we see that $c_{z,n}p_{k,n}(\mu_D\lambda_n+\mu_a)(s+\mu_D\lambda_n+\mu_a)^{-1}$ is holomorphic on $\mathbb C\setminus \{-\mu_D\lambda_j-\mu_a\}_{j=1}^\infty$. So it is sufficient to show the uniform convergence of the above series. 

For $R>0$, we define $\mathbb C_R:=\{s\in \mathbb C:|s|< R\}$. Recalling that $\lambda_n\to \infty$, then there exists a large $N_1>0$ such that $\mu_D\lambda_n+\mu_a>2R$ for $n\ge N_1$. Sequentially, for $s\in \mathbb C_R\setminus \{-\mu_D\lambda_j-\mu_a\}_{j=1}^\infty$ and $n\ge N_1$, we have 
\begin{equation*}
 |s+\mu_D\lambda_n+\mu_a|\ge |\re s+\mu_D\lambda_n+\mu_a|=\re s+\mu_D\lambda_n+\mu_a \ge \mu_D\lambda_n+\mu_a -R,
\end{equation*}
which gives 
\begin{equation*}
 |(\mu_D\lambda_n+\mu_a)(s+\mu_D\lambda_n+\mu_a)^{-1}|\le(\mu_D\lambda_n+\mu_a)(\mu_D\lambda_n+\mu_a -R)^{-1}< 2.
\end{equation*}
Given $\epsilon>0,$ Assumption \ref{condition_p_regularity} yields that there exists $N_2>0$ such that 
for $l\ge N_2$, $\sum_{n=l}^\infty|c_{z,n}p_{k,n}|<\epsilon.$ So, for $l\ge \max\{N_1,N_2\}$ and $s\in \mathbb C_R\setminus \{-\mu_D\lambda_j-\mu_a\}_{j=1}^\infty$, 
\begin{equation*}
 \Big|\sum_{n=l}^\infty c_{z,n}p_{k,n}(\mu_D\lambda_n+\mu_a)(s+\mu_D\lambda_n+\mu_a)^{-1}\Big| \le 2\sum_{n=l}^\infty |c_{z,n}p_{k,n}|<2\epsilon,
\end{equation*}
which implies the uniform convergence. With this uniform convergence result, we see that the series
$\sum_{n=1}^\infty c_{z,n}p_{k,n}(\mu_D\lambda_n+\mu_a)(s+\mu_D\lambda_n+\mu_a)^{-1}$ is holomorphic on  $\mathbb C_R\setminus \{-\mu_D\lambda_j-\mu_a\}_{j=1}^\infty$ for each $R>0$. Given $s_0\in\mathbb C\setminus\{-\mu_D\lambda_j-\mu_a\}_{j=1}^\infty$, we can find $R>0$ such that $s_0\in \mathbb C_R\setminus \{-\mu_D\lambda_j-\mu_a\}_{j=1}^\infty$, which means $\sum_{n=1}^\infty c_{z,n}p_{k,n}(\mu_D\lambda_n+\mu_a)(s+\mu_D\lambda_n+\mu_a)^{-1}$ is analytic on $\mathbb C\setminus \{-\mu_D\lambda_j-\mu_a\}_{j=1}^\infty$. 

Now, let us show the analyticity of  
 $$\sum_{k=1}^{K}(e^{-t_{k-1}s}-e^{-t_ks})\Big[\sum_{n=1}^\infty c_{z,n}p_{k,n}(\mu_D\lambda_n+\mu_a)(s+\mu_D\lambda_n+\mu_a)^{-1}\Big]$$ on $\mathbb C^+$. 
 For the case of $K$ is finite, from the result that $\sum_{n=1}^\infty c_{z,n}p_{k,n}(\mu_D\lambda_n+\mu_a) (s+\mu_D\lambda_n+\mu_a)^{-1}$ is holomorphic on $\mathbb C^+$, we can deduce the desired result straightforwardly. 
 If $K$ is infinity, for $s\in C^+$ we see that 
 $|(\mu_D\lambda_n+\mu_a)(s+\mu_D\lambda_n+\mu_a)^{-1}|\le 1$ and $|e^{-t_ks}|\le 1$, so that  
\begin{equation*}
\sum_{k=1}^{K}|e^{-t_{k-1}s}-e^{-t_ks}|\Big[\sum_{n=1}^\infty 
 |c_{z,n}p_{k,n}|\ |(\mu_D\lambda_n+\mu_a)(s+\mu_D\lambda_n+\mu_a)^{-1}|\Big]
 \le 2 \sum_{k=1}^K \sum_{n=1}^\infty |c_{z,n}p_{k,n}|.
\end{equation*}
Then the above proof and Assumption \ref{condition_p_regularity} give the uniform convergence of the above series on $\mathbb C^+$. Sequentially, we can deduce the desired result and the proof is complete. 
\end{proof}

\subsection{Proof of Theorem \ref{uniqueness}.}
Here we will prove the main theorem. To shorten our proof, we define 
\begin{align*}
 P_{z,k}(s)&:=\sum_{n=1}^\infty c_{z,n}p_{k,n}(\mu_D\lambda_n+\mu_a)(s+\mu_D\lambda_n+\mu_a)^{-1},\\
 \tilde P_{z,k}(s)&:= \sum_{n=1}^\infty 
 c_{z,n}\tilde p_{k,n}(\tilde\mu_D\lambda_n+\tilde\mu_a) (s+\tilde\mu_D\lambda_n+\tilde\mu_a)^{-1}.
\end{align*}

\begin{proof}[Proof of Theorem \ref{uniqueness} (\romannumeral 1) ] 
Given $\mu_a=\tilde \mu_a$ and $\mu_D=\tilde \mu_D$, from \eqref{laplace} and Lemma \ref{analytic}, we have that for $s\in\mathbb C^+$ and $z\in\Gamma_d$, 
\begin{equation}\label{equality_2}
 \sum_{k=1}^{K}(e^{-t_{k-1}s}-e^{-t_ks})P_{z,k}(s)
 =\sum_{k=1}^ K(e^{-t_{k-1}s}-e^{-t_ks})
 \tilde P_{z,k}(s). 
\end{equation}
We first prove that $p_{1,n}=\tilde p_{1,n}$ for $n\in\mathbb N^+$. Multiplying $e^{(t_0+\epsilon)s}$ with sufficiently small $\epsilon>0$ such that 
$\epsilon<t_1-t_0$ on \eqref{equality_2} gives 
\begin{equation}\label{equality_4}
\begin{aligned}
e^{\epsilon s} [P_{z,1}(s)-\tilde P_{z,1}(s)]
 =&e^{(\epsilon+t_0-t_1)s}P_{z,1}(s)  -\sum_{k=2}^{K}(e^{(\epsilon+t_0-t_{k-1})s}-e^{(\epsilon+t_0-t_k)s}) P_{z,k}(s)\\
 &-e^{(\epsilon+t_0-t_1)s}\tilde P_{z,1}(s)+\sum_{k=2}^K(e^{(\epsilon+t_0-t_{k-1})s}-e^{(\epsilon+t_0-t_k)s})\tilde P_{z,k}(s).
\end{aligned}
\end{equation}
For $s\in \mathbb C^+$, not hard to see that  
$$|(\mu_D\lambda_n+\mu_a)(s+\mu_D\lambda_n+\mu_a)^{-1}|\le (\mu_D\lambda_n+\mu_a) 
(\re s +\mu_D\lambda_n+\mu_a)^{-1}\le 1.$$
This with Assumption \ref{condition_p_regularity} yields that 
\begin{equation*}
\begin{aligned}
\lim_{\re s\to\infty}\Big|\sum_{k=2}^{K}(e^{(\epsilon+t_0-t_{k-1})s}&-e^{(\epsilon+t_0-t_k)s}) P_{z,k}(s) \Big|\\
 &\le \lim_{\re s\to\infty} 2e^{(\epsilon+t_0-t_1)\re s} 
 \sum_{k=2}^{K}\sum_{n=1}^\infty |c_{z,n}p_{k,n}| =0.
 \end{aligned} 
\end{equation*}  
Analogously, we can show that other terms in the right side of \eqref{equality_4} tend to zero as $\re s\to\infty$. Now we have $$
\lim_{\re s\to\infty} e^{\epsilon s}[P_{z,1}(s)-\tilde P_{z,1}(s)] = 0\ \text{for}\ z\in \Gamma_d. $$
Then with Lemma \ref{lemma_uniqueness} we have $p_{1,n}=\tilde p_{1,n} \ \text{for}\ n\in\mathbb N^+,$ namely $\|p_1-\tilde p_1\|_{L^2(\Omega)}=0$. 

Next we need to show $p_{k,n}=\tilde p_{k,n}$ for $n\in\mathbb N^+$ and $k>1$. From the result $p_{1,n}=\tilde p_{1,n}, \ n\in\mathbb N^+,$ we have $P_{z,1}(s)=\tilde P_{z,1}(s)$. Inserting it into \eqref{equality_2} yields that for $s\in \mathbb C^+$ and $z\in\Gamma_d$, 
\begin{equation*}
 \sum_{k=2}^{K}(e^{-t_{k-1}s}-e^{-t_ks})P_{z,k}(s)
 =\sum_{k=2}^{ K}(e^{-t_{k-1}s}-e^{-t_ks})\tilde P_{z,k}(s).
\end{equation*}
Following the above proof gives that $\|p_2-\tilde p_2\|_{L^2(\Omega)}=0$. Continuing this argument, we conclude that $\|p_k-\tilde p_k\|_{L^2(\Omega)}=0$ for $k=1,\cdots,K$. The proof is complete. 
\end{proof}

Before to show Theorem \ref{uniqueness} (\romannumeral 2), we state \cite[Lemma 3.5]{RundellZhang:2020} below. 
\begin{lemma}\label{lemma_uniqueness_2}
 Let $\{\tau_n\}_{n=1}^\infty$ be an absolutely convergent complex sequence and $\{\gamma_n\}_{n=1}^\infty$ be a real sequence satisfying  $0\le \gamma_1<\gamma_2<\cdots$ and $\gamma_n\to \infty.$
For the complex series $\sum_{n=1}^\infty \tau_n e^{-\gamma_n t}$ defined on $\mathbb{C}^+$, if the set of its zeros on $\mathbb{C}^+$ has an accumulation point, then $\{\tau_n\}_{n=1}^\infty=\{0\}$.
\end{lemma}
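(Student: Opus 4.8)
The plan is to combine two classical ingredients: the identity theorem for holomorphic functions, which upgrades an accumulation point of zeros into vanishing on all of $\mathbb C^+$, and a uniqueness argument for generalized Dirichlet series, which turns that vanishing into the vanishing of every coefficient.

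First I would set $f(s):=\sum_{n=1}^\infty \tau_n e^{-\gamma_n s}$ and verify that $f$ is holomorphic on $\mathbb C^+$. Since $|e^{-\gamma_n s}|=e^{-\gamma_n\re s}\le 1$ whenever $\re s\ge 0$, the hypothesis $\sum_{n=1}^\infty|\tau_n|<\infty$ together with the Weierstrass $M$-test gives uniform convergence of the series on the closed half-plane $\{\re s\ge 0\}$. Each partial sum is entire, so by the Weierstrass convergence theorem $f$ is holomorphic on the open, connected set $\mathbb C^+$ and continuous up to its boundary. Because the zero set of $f$ is assumed to have an accumulation point in $\mathbb C^+$, the identity theorem forces $f\equiv 0$ on $\mathbb C^+$, and hence, by continuity, $f(t)=\sum_{n=1}^\infty\tau_n e^{-\gamma_n t}=0$ for every real $t>0$.

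The second step is a coefficient-peeling argument along the positive real axis. Suppose toward a contradiction that $\{\tau_n\}_{n=1}^\infty\ne\{0\}$, and let $n_0$ be the least index with $\tau_{n_0}\ne 0$. Factoring out $e^{-\gamma_{n_0}t}$ from the relation $\sum_{n\ge n_0}\tau_n e^{-\gamma_n t}=0$ yields
\begin{equation*}
\tau_{n_0}=-\sum_{n>n_0}\tau_n e^{-(\gamma_n-\gamma_{n_0})t},\qquad t>0.
\end{equation*}
For $n>n_0$ the strict monotonicity of $\{\gamma_n\}$ gives $\gamma_n-\gamma_{n_0}\ge \gamma_{n_0+1}-\gamma_{n_0}>0$, so each summand is dominated by $|\tau_n|$ uniformly in $t\ge 0$ and tends to $0$ as $t\to\infty$. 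The Dominated Convergence Theorem, with the summable dominating sequence $\{|\tau_n|\}_{n=1}^\infty$, then sends the right-hand side to $0$, forcing $\tau_{n_0}=0$, a contradiction. Hence $\tau_n=0$ for all $n\in\mathbb N^+$.

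I expect the only delicate point to be handling the analytic step cleanly: one must check that the accumulation point is genuinely interior to $\mathbb C^+$, so that the identity theorem applies on the connected domain, and that the uniform convergence on $\{\re s\ge 0\}$ legitimately yields both the interior holomorphy and the boundary continuity needed to evaluate $f$ on the positive real axis. Once these facts are secured, the Dirichlet-series uniqueness is routine, the monotone gap condition $\gamma_n-\gamma_{n_0}>0$ being exactly what powers the limit $t\to\infty$.
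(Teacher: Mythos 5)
Your proof is correct. Note that the paper gives no proof of this lemma at all --- it is quoted verbatim from the cited reference \cite[Lemma 3.5]{RundellZhang:2020} --- so there is nothing to compare against line by line; but your two-step argument (Weierstrass $M$-test on $\{\re s\ge 0\}$ using $\gamma_n\ge 0$ and $\sum_n|\tau_n|<\infty$ to get holomorphy, the identity theorem to conclude $f\equiv 0$ on the connected set $\mathbb{C}^+$, and then peeling off the least nonzero coefficient by multiplying by $e^{\gamma_{n_0}t}$ and letting $t\to\infty$ with dominated convergence over the counting measure) is precisely the standard uniqueness proof for general Dirichlet series, and every step checks out. The one caveat is the point you already flag: the hypothesis must be read as the zeros having an accumulation point \emph{in} the open half-plane $\mathbb{C}^+$, since an accumulation point on the imaginary axis would not activate the identity theorem; that is the intended reading here, and it is how the lemma is applied in the proofs of Theorem \ref{uniqueness} (\romannumeral 2) and (\romannumeral 3).
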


\begin{proof}[Proof of Theorem \ref{uniqueness} (\romannumeral 2)] 
 Given the condition \eqref{condition_mu_a} and $\mu_D=\tilde \mu_D$, firstly let us show $\mu_a=\tilde\mu_a$. 
 Assume not, without loss of generality, we can set $\mu_a<\tilde\mu_a$.  From Lemma \ref{data_formula}, it holds that for $t\in (t_0,t_1],$  
 \begin{align*}
-\frac{\partial u}{\partial\nu}(z,t)&=\sum_{n=1}^\infty c_{z,n}p_{1,n}(1-e^{-(\mu_D\lambda_n+\mu_a)(t-t_0)}),\\
-\frac{\partial \tilde u}{\partial\nu}(z,t)&=\sum_{n=1}^\infty c_{z,n}\tilde p_{1,n}(1-e^{-(\mu_D\lambda_n+\tilde \mu_a)(t-t_0)}).
 \end{align*}
Then we have 
\begin{equation}\label{equality_5}
\sum_{n=1}^\infty c_{z,n}p_{1,n}(1-e^{-(\mu_D\lambda_n+\mu_a)(t-t_0)})=\sum_{n=1}^\infty c_{z,n}\tilde p_{1,n}(1-e^{-(\mu_D\lambda_n+\tilde \mu_a)(t-t_0)}) 
\end{equation}
for $t\in (t_0,t_1].$ The condition $\mu_a<\tilde \mu_a$ gives that 
$\mu_D\lambda_1+\mu_a<\mu_D\lambda_j+\mu_a$ for $j>1$ and $\mu_D\lambda_1+\mu_a<\mu_D\lambda_j+\tilde\mu_a$ for $j\ge 1$. Using Lemma \ref{lemma_uniqueness_2}, we have that $\sum_{\lambda_n=\lambda_1} c_{z,n}p_{1,n}=0$ for $z\in \Gamma_d$. This together with Lemma \ref{lemma_uniqueness_1} leads to $p_{1,n}=0$ for $\lambda_n=\lambda_1$, which contradicts with condition \eqref{condition_mu_a}. 
So we have $\mu_a=\tilde \mu_a$. 

With $\mu_a=\tilde \mu_a$ and the proof for Theorem \ref{uniqueness} (\romannumeral 1), we deduce that $\|p_k-\tilde p_k\|_{L^2(\Omega)}=0$ for $k=1,\cdots,K$. 
The proof is complete. 
\end{proof}

\begin{proof}[Proof of Theorem \ref{uniqueness} (\romannumeral 3)] 
 With condition \eqref{condition_mu_a} and $\mu_a=\tilde 
\mu_a$, let us show $\mu_D=\tilde\mu_D$.  
 Assuming that $\mu_D<\tilde\mu_D$, from \eqref{equality_5} we have 
 \begin{equation*}
\sum_{n=1}^\infty c_{z,n}p_{1,n}(1-e^{-(\mu_D\lambda_n+\mu_a)(t-t_0)})
=\sum_{n=1}^\infty c_{z,n}\tilde p_{1,n}(1-e^{-(\tilde\mu_D\lambda_n+ \mu_a)(t-t_0)}).
\end{equation*} 
 With Lemmas \ref{lemma_uniqueness_1} and  \ref{lemma_uniqueness_2}, following the proof of Theorem \ref{uniqueness} (\romannumeral 2) we can get $p_{1,n}=0$ if $\lambda_n=\lambda_1$, which is a contradiction. Hence, $\mu_D=\tilde \mu_D$. 
 
 With the result $\mu_D=\tilde \mu_D$ and the proof of Theorem \ref{uniqueness} (\romannumeral 1), we have $\|p_k-\tilde p_k\|_{L^2(\Omega)}=0$ for $k=1,\cdots,K$. The proof is complete. 
\end{proof}

\section{Numerical inversions.}\label{sec_num}
Noting that $\mu_D:=1/(3\mu_s(1-g))$, where $g$ is the anisotropy parameter and known, we may consider the numerical inversion of $\textbf{a}:=(\mu_a,\mu_s,\mu_f)$. In practice, for the FDOT people inject the light from a laser source to the biological tissue via one of the source fibers at boundary, then measure the amount of transmitted light at all the boundary detector locations using the detector fibers. This process is repeated for all the source locations. Now we show the mathematical description of our FDOT as follows.

Let $\Gamma_s$ and $\Gamma_d$ be the finite set of source locations and detector locations, respectively. Let $\mathcal{T}\subset(0,\infty)$ be a time interval in which we take the measurements. Then, we can denote  the exact boundary measurements corresponding to any given input $\textbf{a}\in\mathcal A$ with $\big(x_s^{(n)},x_d^{(k)}\big)\in\Gamma_s\times\Gamma_d$ and $t_i\in\mathcal{T}$ by $u_m[\textbf{a}]\big(x_d^{(k)},t_i;x_s^{(n)}\big)$. Here  $\mathcal A$ is the admissible set. Taking the measured data at $N$ different excitation sources, $K$ different detectors and $I$ different times, the computational inverse problem is to determine the unknown $\textbf{a}$ from the following set of measurements
\begin{equation}\label{mea}
\left\{u_m\big(x_d^{(k)},t_i;x_s^{(n)}\big): n=1,\cdots,N,\  k=1,\cdots, K,\ i=1,\cdots,I\right\},
\end{equation}
where the set $\{x_d^{(k)}\}_{k=1}^K$ and the set $\{t_i\}_{i=1}^I$ may be varied for different source locations.
More precisely, for each given source $x_s^{(n)}$, the process of obtaining the finite set of measurements (\ref{mea}) can be described as
\begin{equation}\label{sun4-3}
\mathbb{K}_n:\mathcal A\to \mathbb{R}^{K\times I}, \   \mathbb{K}_n(\textbf{a})= u_m[\textbf{a}]\big(x_d^{(k)}, t_i;x_s^{(n)}\big),\  k=1,\cdots,K,\ i=1,\cdots,I.
\end{equation}
For convenience, we set $M=N\times K$ and denote $\{\omega_m:=\big(x_s^{(m)},x_d^{(m)}\big)\}_{m=1}^M$ be the $M$ different S-D pairs we used in \eqref{sun4-3}, where the source location or detector location for different S-D pairs may be same. Then we can rewrite (\ref{sun4-3}) as 
\begin{equation}\label{sun4-4}
\mathbb{K}: \mathcal A\to \mathbb{R}^{M\times I}, \quad \mathbb{K}(\textbf{a})=h,
\end{equation}
where $h:=u_m(\hat\omega,\hat t)$ is the measured data corresponding to S-D pairs $\hat\omega:=(\omega_1,\cdots,\omega_M)$ and time points $\hat t\in \mathcal{T}^I$. Then our FDOT is to solve the equation \eqref{sun4-4}. 

In the follows, unless in particular cases, we always take the physical parameters as
\begin{eqnarray*}
c=0.219\;\rm {mm}/{ps}, \quad \mu_{s}'=1.0 \; \rm{mm}^{-1}, \quad \mu_a=0.1\;\rm{mm}^{-1},\quad \beta=0.01\;\rm{mm}^{-1}.
\end{eqnarray*}
We use mm as the unit of length. For simplicity, we consider the zero-lifetime case of \eqref{emission-um}, i.e. the lifetime in the source term $S_0[\mu_f,u_e]$ is $\tau=0$. Further, we are here focusing on thick ($>1$ cm) or large volume tissue ($>10$ cm$^3$) like chest and thus we may set
\begin{equation*}
\Omega:=\mathbb{R}^3_+=\left\{(x_1,x_2,x_3): (x_1,x_2)\in\mathbb{R}^2, x_3>0\right\}
\end{equation*}
with the boundary $\partial\Omega:=\left\{(x_1,x_2,0): (x_1,x_2)\in\mathbb{R}^2 \right\}$. 

\subsection{Identify a stationary target by peak detection.}

Supposing $\mu_s$ and $\mu_a$ are known, we consider to identify the stationary $\mu_f(x)$. For the ideal case that the size of fluorescence target is very small, we may concern the point target, i.e. 
\begin{equation*}\label{delta}
\mu_f(x)=P\delta(x-x_c),
\end{equation*}
where $x_c=(x_{c1},x_{c2},x_{c3})\in\Omega$ is the location of point target and $P>0$ is the concentration of fluorescent target. Then, our FDOT is to identify $\textbf{a}=(x_{c1},x_{c2},x_{c3},P)$. For each given $x_d\in\partial\Omega$ and $x_s\in\partial\Omega$, we have that
\begin{equation}\label{measure}
u_m(t) = C(t)P \int_0^t \frac{1}{\big[(t-s)s\big]^{3/2}}  e^{-\frac{|x_d-x_c|^2}{4c\mu_D(t-s)}} e^{-\frac{|x_s-x_c|^2}{4c\mu_Ds}} K_3(0,x_{c3};t-s) K_3(x_{c3},0;s) \,ds,
\end{equation}
where $C(t):=\frac{ e^{-c\mu_a t}}{16\pi^3 c{\mu_D}^2}$ and
\begin{equation*}\label{K3}
K_3(x_3,y_3;t) :=
1 - \beta\sqrt{\pi c\mu_Dt} \,\exp{\left(\left(\frac{x_3+y_3+2\beta c\mu_Dt}{\sqrt{4c\mu_Dt}}\right)^2\right)}
 \mathop{\mathrm{erfc}}
\left(\frac{x_3+y_3+2\beta c\mu_Dt}{\sqrt{4c\mu_Dt}}\right).
\end{equation*}
The expression \eqref{measure} is a temporal point-spread function (TPSF). Before introducing the inversion strategy by peak detection, we define two geometric concepts for TPFS as follows (see Figure \ref{peaktime-fig}): 
\begin{itemize}
 \item  Peak intensity $u_m^{\rm peak}$:  the TPSF maximum;
 \item Peak time $t_{\rm peak}$: the temporal position of TPSF maximum.
\end{itemize}

The existence and uniqueness of the peak time for \eqref{measure} are mathematically investigated in \cite{Sun21}. There holds that
\begin{equation}\label{peak_int}
u_m^{\rm peak} := u_m^{\rm peak}[x_s,x_d] \approx C(t_{\rm peak}) P\left(\sqrt{\frac{\pi}{A}} + \sqrt{\frac{\pi}{B}} \right)
\exp\left(-\frac{2A+2B}{t_{\rm peak}} \right)  {t_{\rm peak}}^{-\frac{3}{2}}
\end{equation}
and
\begin{equation}\label{peak_time}
t_{\rm peak} :=t_{\rm peak}[x_s,x_d] \approx \frac{1}{4c\mu_a} \left( -3+\sqrt{9+32c\mu_a (A+B)}\right),
\end{equation}
where 
\begin{equation*}
A:=\frac{|x_d-x_c|^2}{4c\mu_D}, \quad B:=\frac{|x_s-x_c|^2}{4c\mu_D}.
\end{equation*}

\begin{figure}[h!]
\centering
\includegraphics[width=0.55\textwidth,height=0.25\textheight]{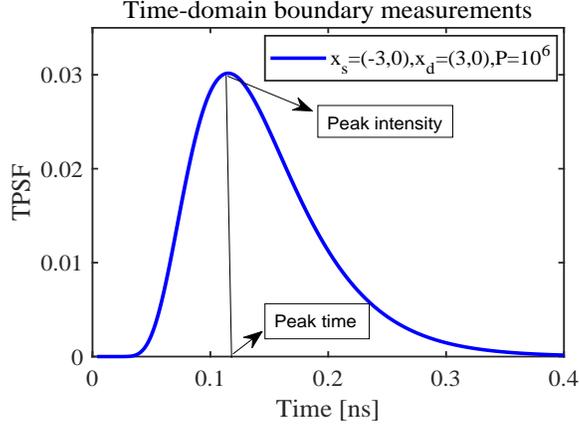}\\
\caption {The profile of TPSF for a point target located at $x_c=(0,0,5)$.}\label{peaktime-fig}
\end{figure}

From above expressions we can see that the shape of the profile of $u_m$ is actually dominated by the distances $|x_d-x_c|$ and $|x_s-x_c|$. Fixing the distance between detector and source, we have stronger peak intensity as the S-D pair closing to the target and we can obtain the strongest one when the S-D pair satisfies $\frac{x_{s1}+x_{d1}}{2}=x_{c1}$ and $\frac{x_{s2}+x_{d2}}{2}=x_{c2}$. This implies that we can identify $(x_{c1},x_{c2})$ by comparing the peak intensity corresponding to different S-D pairs. Since the peak time is not dependent on the concentration $P$, we can further identify the depth of target ($x_{c3}$) from the expression of peak time as in \eqref{peak_time}. Finally, the concentration $P$ can be identified by \eqref{peak_int}. Hence the horizontal location, vertical location and concentration $P$ of the target can be separately determined by scanning the S-D pair at the boundary surface. This inversion strategy by peak detection is summarized as follows.

\begin{itemize}
\item  {Step 1} (identify $(x_{c1},x_{c2})$): Fix the distance $|x_s-x_d|$ and search an S-D pair $(x_s^*,x_d^*)$ such that 
\begin{equation*}
{u_{m,*}^{\rm peak}}:=u_m^{\rm peak}[x_s^*,x_d^*] = \max_{(x_s,x_d)\in\partial\Omega\times\partial\Omega} \big\{ u_m^{\rm peak}[x_s,x_d]\big\}.
\end{equation*}
Then the horizontal location of the target is identified by
\begin{equation*}\label{hori}
(x_{c1},x_{c2})=\left(\frac{x_{s1}^*+x_{d1}^*}{2}, \frac{x_{s2}^*+x_{d2}^*}{2}\right).
\end{equation*}
\item {Step 2} (identify $x_{c3}$): 
By \eqref{peak_time}, the depth of the target is identified by 
\begin{equation*}\label{verti}
x_{c3}=\sqrt{c^2\mu_D\mu_a \big(t_{\rm peak}^* \big)^2+\frac{3}{2}c\mu_Dt_{\rm peak}^*-\frac{1}{4}|x_s^*-x_d^*|^2},
\end{equation*}
where $t_{\rm peak}^*$ is the measured peak time for the S-D pair $(x_s^*,x_d^*)$ obtained from Step 1.
\item {Step 3} (identify $P$): 
By \eqref{peak_int}, the concentration of the target is identified by 
\begin{equation*}\label{P}
P=\left. {u_{m,*}^{\rm peak}} \middle/ \left\{C(t_{\rm peak}^*) \left(\sqrt{\frac{\pi}{A^*}} + \sqrt{\frac{\pi}{B^*}} \right)\exp\left(-\frac{2A^*+2B^*}{t_{\rm peak}^*} \right)  {t_{\rm peak}^*}^{-\frac{3}{2}}\right\} \right..
\end{equation*}
\end{itemize}

Now we test the accuracy of \eqref{peak_int} and \eqref{peak_time} to ensure the effectiveness of above inversion strategy. We start with the point target given in the following example. 

\medskip

{\bf Example 1.} Set $x_s=(-3,0,0), x_d=(3,0,0)$ and $P=10^6 \; {\rm mm}^{-1}$. Suppose the point target is located at
\begin{equation*}
x_c=(0,0,x_{c3}).
\end{equation*}
We take $x_{c3}\in [0.05,6]$ and show the comparisons of \eqref{peak_int} in Figure \ref{peakpoint-fig} (a)  and \eqref{peak_time} in Figure \ref{peakpoint-fig} (b), respectively.

\begin{figure}[h!]
\centering
\includegraphics[width=1\textwidth,height=0.25\textheight]{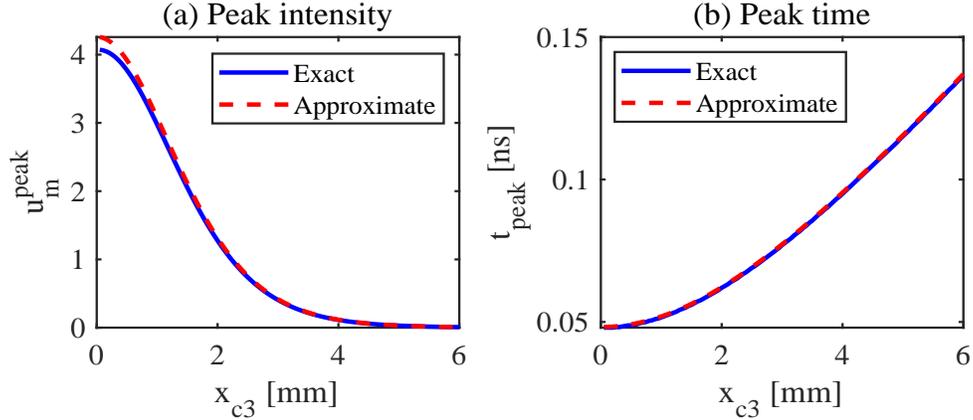}\\
\caption {Comparisons of \eqref{peak_int} and \eqref{peak_time} with the exact ones. (point target)}\label{peakpoint-fig}
\end{figure}

From Figure \ref{peakpoint-fig}, it can be observed that \eqref{peak_int} and \eqref{peak_time} provide a good approximation to the exact peak intensity and peak time, respectively. The approximate errors decrease as the depth of point target becomes deeper. 

By \eqref{peak_int} and \eqref{peak_time}, we can further give the expressions of approximate peak intensity and peak time for a small target. Suppose $\Omega_0$ is the distribution domain of this small target and $x_c$ is its center point.  By the middle value theorem we have $u_m[\Omega_0](t)\approx |\Omega_0|u_m[x_c](t)$, where $|\Omega_0|$ is the volume of small target. Then the peak time of small target will be given by \eqref{peak_time}, while its peak intensity can be approximated by $u_m^{\rm peak}[\Omega_0]\approx |\Omega_0|u_m^{\rm peak}[x_c]$, where $u_m^{\rm peak}[x_c]$ is the peak intensity of the point target located at $x_c$. Now we  have tested the accuracy of \eqref{peak_int} and \eqref{peak_time} to approximate the peak intensity and peak time of a small target. The case of a cubic target will be  given in the following example. 

\medskip

{\bf Example 2.} Set $x_s=(-3,0,0)$, $x_d=(3,0,0)$ and $P=10^6 \; {\rm mm}^{-1}$. Suppose the cubic target with side length $L$ is located at $x_c=(0,0,5)$. We take $L\in [0.01,1]$ and show the comparisons of the above approximate expressions with the exact ones in Figure \ref{peak-fig-cubic} (a) and (b), respectively.

\begin{figure}[h!]
\centering
\includegraphics[width=1\textwidth,height=0.23\textheight]{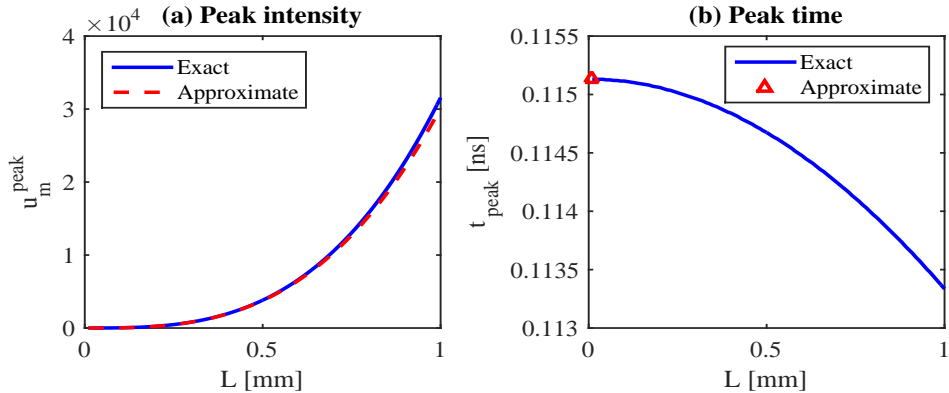}\\
\caption {Comparisons of the approximate peak intensity and peak time with the exact ones. (cubic target)}\label{peak-fig-cubic}
\end{figure}

Figure \ref{peak-fig-cubic} shows that the peak intensity and peak time of a small target can be approximated well by the ones of a point target located at its center. Even if $L=1$ mm, the relative error for peak time is $1.59e-2$, implying that  \eqref{peak_time} is still useful for the case of small target.

Summarizing above numerical results, we realize that the explicit expressions of peak intensity and peak time are  accurate, and hence our proposed inversion strategy is effective to identify the location of a small target. However, we point out that the validity of this strategy depends on the relative distance $|A-B|$, which requires $|A-B|$ is small enough \cite{Sun21}.

\subsection{Identify a target by iterative algorithm. }

The FDOT is to solve the operator equation \eqref{sun4-4}. For exact observation data, if one of $\mu_s$ and $\mu_a$ is known, this equation has a unique solution from Theorem \ref{uniqueness}. For given noisy data $h^\delta$ satisfying
\begin{equation*}\label{noisy-data}
\|h^\delta-h\|_2 \le \delta,
\end{equation*}
we consider the approximate solution of \eqref{sun4-4} by the minimizer of the cost functional
\begin{equation}\label{cost-J}
\mathcal{J}_{\delta,\alpha}(\mathbf{a}):=\|\mathbb{K}(\mathbf{a})-h^\delta\|^2_2+
\alpha\|\mathbf{a}\|^2_2,
\end{equation}
where $\alpha>0$ is the regularization parameter. Supposing the fluorescence target is often of regular shape with a constant absorbing coefficient, we can describe the distribution of $\mu_f$ in $\mathbb{R}^3_+$ as well as its interface by a finite dimensional vector $\textbf{a}\in \mathbb{R}^S$. Here $S$ is the dimension number of vector $\textbf{a}$. Then we can transform our FDOT into a finite dimensional inverse problem. For instance, if we suppose the stationary target is a cuboid, we have
\begin{eqnarray*}\label{cuboid}
\mu_f(x)=
\begin{cases}
P, &x\in \Omega_0:=\left\{x:\,x_1\in(a_1,b_1),\,x_2\in(a_2,b_2),\,x_3\in(a_3,b_3)\right\},\\
0, &x\not\in \Omega_0,
\end{cases}
\end{eqnarray*}
where $P>0$ is the concentration of fluorescent target and $b_3>a_3>0$. 

Now we give a sketch for solving this minimization problem. In fact, solving \eqref{cost-J} can be transformed to solve a normal equation combining with an iteration process:
\begin{equation*}\label{inv9}
\begin{cases}
\begin{aligned}
(\alpha I+\mathbb{G}^T\mathbb{G})\delta{\mathbf{a}_{j} }&=\mathbb{G}^T (h-\mathbb{K}(\mathbf{a}_j)),\\
{\mathbf{a}_{j} }+\delta{\mathbf{a}_{j} } &\to {\mathbf{a}_{j+1} }, \quad j=0,1,\cdots,
\end{aligned}
\end{cases}
\end{equation*}
where $\delta{\mathbf{a}_{j} }$ is a perturbation vector for any given ${\mathbf{a}_{j} }\in \mathcal{A}$, $j$ denotes the iterative number and ${\mathbf{a}_{0} }$ is the initial guess. Here $\alpha>0$ is selected by the discrepancy principle. The sensitivity matrix $\mathbb{G}$ is given by
\begin{eqnarray*}\label{inv7}
\mathbb{G} ={(g_{qs})}_{Q\times S},\quad Q=N\times K\times I,
\end{eqnarray*}
where $g_{qs}$ is the derivative of $u_m$ on the $s$-th component of $\textbf{a}$, corresponding to the $q$-th S-D pair of total $N\times K$ S-D pairs. We terminate the
iteration process by $\|\delta\mathbf{a}_j\|_2\leq \eta$ for some  specified $\eta>0$, which is taken as $\eta=10^{-8}$ for our numerics.

\begin{figure}[h!]
\centering
\includegraphics[width=0.5\textwidth,height=0.3\textheight]{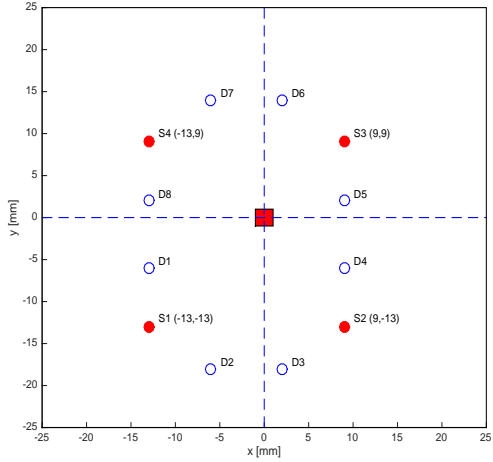}\\
\caption {Horizontal projection of the configuration: the middle red cubic is the projection of a target to the boundary $\partial\Omega$, the red points are the excitation points, and the blue circles are the detector points.}\label{setup}
\end{figure}

\medskip

{\bf Example 3.} Set $\mu_s=10\;\rm{mm}^{-1}$ and $\mu_a=2 \; \rm{mm}^{-1}$. We suppose the distribution of cuboid target is
\begin{eqnarray*}\label{inv11}
\Omega_0:=\left\{ (x_1,x_2,x_3): x_1\in(-1,1),\  x_2\in(-1,1),\  x_3\in(9,11) \right\}
\end{eqnarray*}
with the concentration $P=0.5\; \rm{mm}^{-1}$, i.e.
\begin{equation*}
\mu_f(x)=
\begin{cases}
P, &x\in \Omega_0,\\
0, &x\not\in \Omega_0.
\end{cases}
\end{equation*}
For this example,  we observe the measurements from the setup shown in Figure \ref{setup} and test our inversion algorithm for the following two cases:
\begin{itemize}\label{example_3_two_cases}
\item[(3a)] given $(\mu_s,\mu_a)$, identify stationary $\mu_f(x)$;
\item[(3b)] simultaneously identify $(\mu_s,\mu_a,\mu_f(x))$. 
\end{itemize}

As in Figure \ref{setup}, we have 4 excitation sources and observe the measurements at 8 detector points for each source point.  Hence we have 32 S-D pairs in total for this setup. The sources and detectors are located at
\begin{equation*}
S_1=(-13,-13),\;S_2=(9,-13),\;S_3=(9,9),\;S_4=(-13,9)
\end{equation*} 
and
\begin{equation*}
D_1=(-13,-6),\;D_2=(-6,-18),\;D_3=(2,-18),\;D_4=(9,-6),
\end{equation*}
\begin{equation*}
D_5=(9,2),\;D_6=(2,14),\;D_7=(-6,14),\;D_8=(-13,2).
\end{equation*}

For each S-D pair, we select the peak time $t_{\rm peak}$ and choose 20 time points $\hat t:=[t_{\rm peak}-10\Delta t: \Delta t: t_{\rm peak}+9\Delta t]$ with time step $\Delta t=2$ ps, so that the measurement $h$ is a $640$-dimensional vector. The noisy data $h^\delta$ of $h$ is described by
\begin{equation}\label{noisydata}
h^\delta = h(1+\zeta\epsilon),
\end{equation}
where $\epsilon>0$ is the noise level and $\zeta$ is a random standard Gaussian noise.

For the case $(3a)$, we set the exact solution as  
\begin{eqnarray*}\label{case1}
\textbf{a}_{\rm exa}=(-1,1,-1,1,9,11,0.5).
\end{eqnarray*}
Noting the randomness of the noisy data \eqref{noisydata}, we carry out 10-time recoveries from different noisy data set for each given noise level and take an average of the recoveries. The recovered results from the initial guess $\mathbf{a}_0=(-5.1, -4.9, -2.1, -1.9, 5.9, 6.1, 0.1)$ are listed in Table \ref{tab1}, where $\textbf{a}_{\rm rec}^{\rm avr}$ denotes the average of 10-time recoveries and $Err$ is the $L^2$ relative error in recoveries given by
\begin{equation*}\label{Err}
Err:=\frac{\norm{\textbf{a}_{\rm exa}-\textbf{a}_{\rm rec}^{\rm avr}}_2}{\norm{\textbf{a}_{\rm exa}}_2}.
\end{equation*}
The results of the 10-time recoveries are plotted in Figure \ref{case1-10time}. 

\begin{table}[h!]
\centering
\caption{The average of 10-time recoveries for the case $(3a)$.}\label{tab1}
\begin{tabular}{|c|c|c|c|c|}
\hline
$\epsilon$  &${{\textbf{a}}_{\rm rec}^{\rm avr}}=(a_1^*, b_1^*, a_2^*, b_2^*, a_3^*, b_3^*, P^*)$   &$Err$\\
\hline
$5\%$   & $(-0.85, 0.92, -0.85, 0.92, 9.01, 10.78, 0.81)$    & $3.93e-2$\\
$1\%$   & $(-0.98, 0.99, -0.98, 0.99, 9.00, 10.97, 0.52)$    & $3.50e-3$\\
\hline
\end{tabular}
\end{table}

\begin{figure}[h!]
\centering
\includegraphics[width=1\textwidth,height=0.25\textheight]{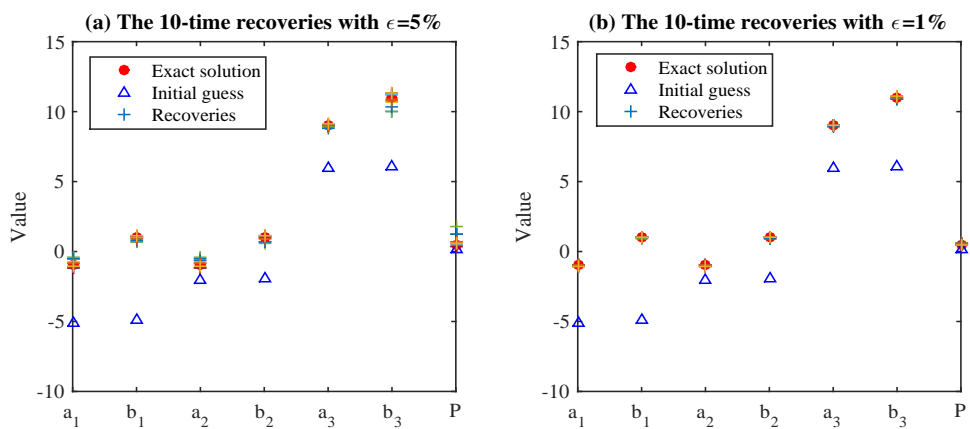}\\
\caption {The 10-time recoveries with different $\epsilon$ for the case $(3a)$.}\label{case1-10time}
\end{figure}

For the case $(3b)$, we need to consider the simultaneous inversion of $(\mu_s,\mu_a,\mu_f(x))$. This implies that we have 9 unknown parameters and the exact solution is 
\begin{eqnarray*}\label{case2}
\textbf{a}_{\rm exa}=(10,2,-1,1,-1,1,9,11,0.5).
\end{eqnarray*}
Setting the initial guess $\mathbf{a}_0=(5, 0.5, -5.1, -4.9, -2.1, -1.9, 5.9, 6.1, 0.1)$, the recovered results are shown in Table \ref{tab2} and Figure \ref{case2-10time}.

\begin{table}[h!]
\centering
\caption{The average of 10-time recoveries for the case $(3b)$.}\label{tab2}
\begin{tabular}{|c|c|c|c|c|}
\hline
$\epsilon$  &${{\textbf{a}}_{\rm rec}^{\rm avr}}=(\mu_s^*,\mu_a^*,a_1^*, b_1^*, a_2^*, b_2^*, a_3^*, b_3^*, P^*)$   &$Err$\\
\hline
$5\%$   & $(10.04,2.02,-1.01, 0.98, -0.99, 0.99, 9.04, 11.03, 0.80)$    & $2.12e-2$\\
$1\%$   & $(10.01,2.00, -1.00, 1.00, -1.00, 1.00, 9.01, 11.01, 0.53)$    & $2.20e-3$\\
\hline
\end{tabular}
\end{table}

\begin{figure}[h!]
\centering
\includegraphics[width=1\textwidth,height=0.25\textheight]{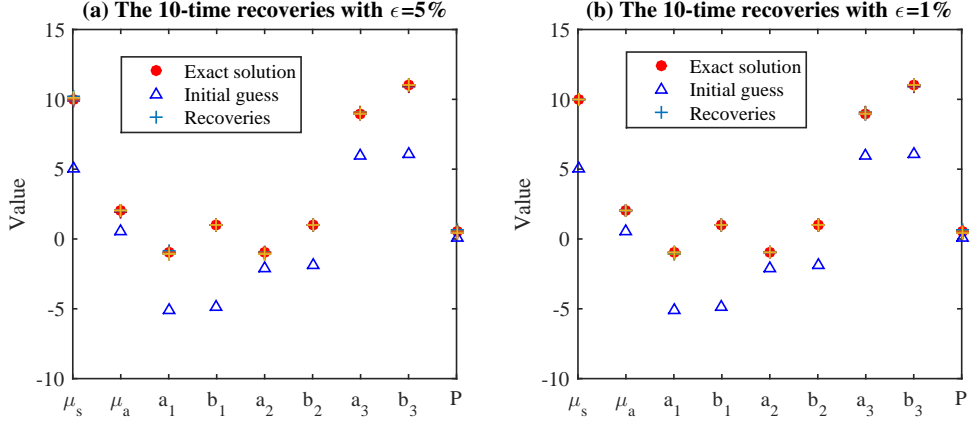}\\
\caption {The 10-time recoveries with different $\epsilon$ for the case $(3b)$.}\label{case2-10time}
\end{figure}

From Figures \ref{case1-10time} and \ref{case2-10time}, we see that the recoveries are satisfactory in view of the high ill-posedness of our inverse problem, which is caused by 
the nonlinearity of this inverse problem and the sparsity of the boundary measurements. Further, the results for case $(3b)$ indicate that we may reconstruct $(\mu_s,\mu_a,\mu_f(x))$ simultaneously, which is not covered by Theorem \ref{uniqueness}.  

In what follows, we further test the recovery performances of our inversion strategy for a time-dependent fluorescence target.

\medskip

{\bf Example 4.} Set $\mu_s=10\;\rm{mm}^{-1},\, \mu_a=2 \; \rm{mm}^{-1}$ and $P=0.5\; \rm{mm}^{-1}$. We recover a time-dependent cubic target for the following two cases: 
\begin{itemize}\label{example_4_two_cases}
\item[$(4a)$] fixed center $x_c=(0,0,10)$, time-dependent side length $L(t)=\frac{1}{2}+t$;

\item[$(4b)$] fixed side length $L=2$ mm, time-dependent center $x_c(t)$ with
\begin{equation*}\label{xct}
\begin{cases}
x_{c1}=-4+6t, \\
x_{c2}=-5+8t, \\
x_{c3}=8.
\end{cases}
\end{equation*}
\end{itemize}

In this example, we suppose either the side length or the center of the target depends on the time linearly and we are going to recover the time-dependent target by identifying its expansion coefficients in polynomial space. We also obtain the measurements from the setup in Figure \ref{setup}. For the case $(4a)$, setting the initial guess $(-2,-2,5,0.2)$ for $(x_c,P)$ and the initial guess $\frac{1}{10}+\frac{1}{5}t$ for $L(t)$, the recovered results are plotted in Figures \ref{t-cubic} and \ref{cubic-time}. For the case $(4b)$, setting the initial guess $(-1+2t,-2+3t,4)$ for $x_c(t)$ and the initial guess $(0.2,0.1)$ for $(L,P)$, the numerical performances of our inversion scheme are shown in Figures \ref{t-center} and \ref{cubic-time-center}.

Figures \ref{t-cubic}--\ref{cubic-time-center} show that the reconstructions are satisfactory. The recovered targets are in good agreement with the exact shape and its information of the time evolution process can be reconstructed well. Hence, summarizing the numerical results obtained from Examples 3--4, our proposed iterative algorithm is effective to recover the fluorescence target. Further, although we do the inversion only for a cubic or cuboid target in above examples, the position of the target with a general shape can be identified by cuboid approximation \cite{Sun19}.

\begin{figure}[h!]
\centering
\includegraphics[width=1\textwidth,height=0.27\textheight]{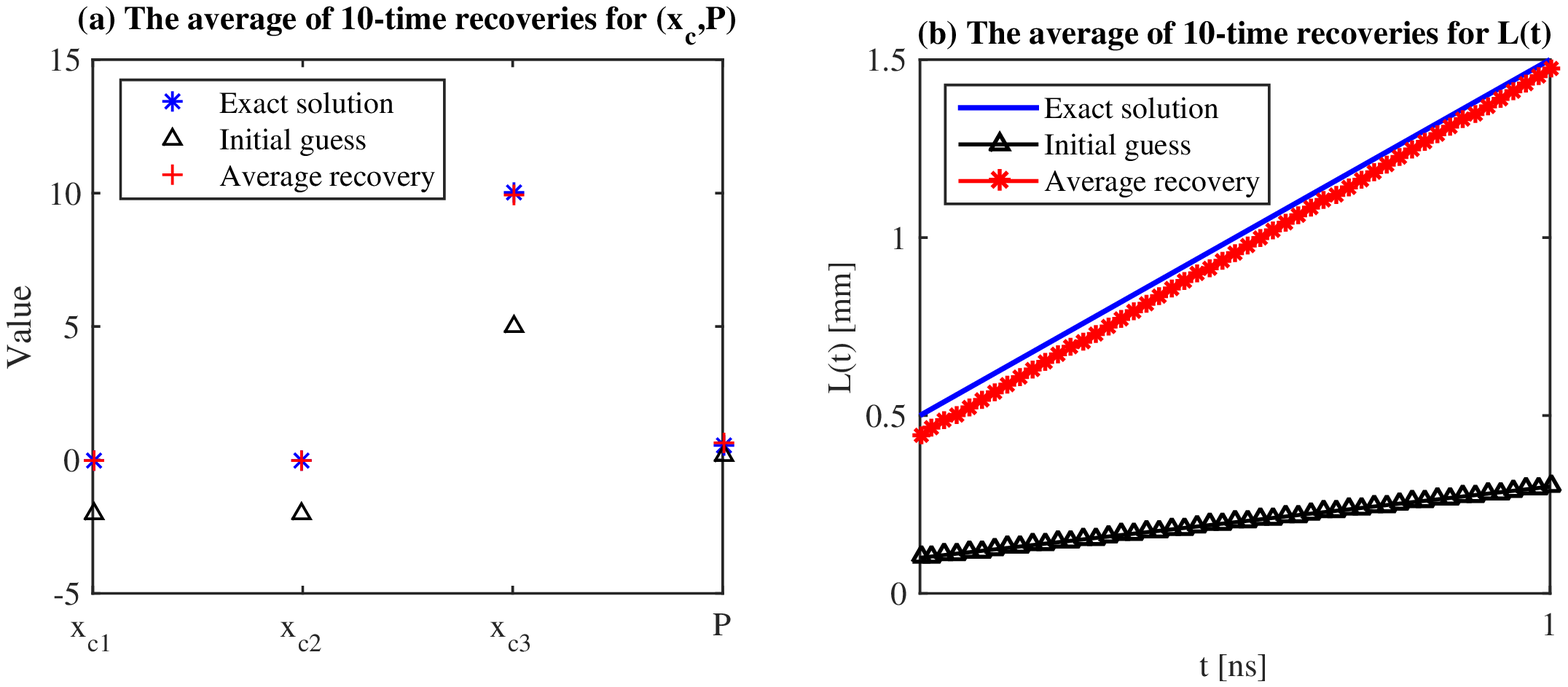}\\
\caption {The exact and recovered results for the case $(4a)$ ($\epsilon=1\%$).}\label{t-cubic}
\end{figure}

\begin{figure}[h!]
\centering
\includegraphics[width=1\textwidth,height=0.44\textheight]{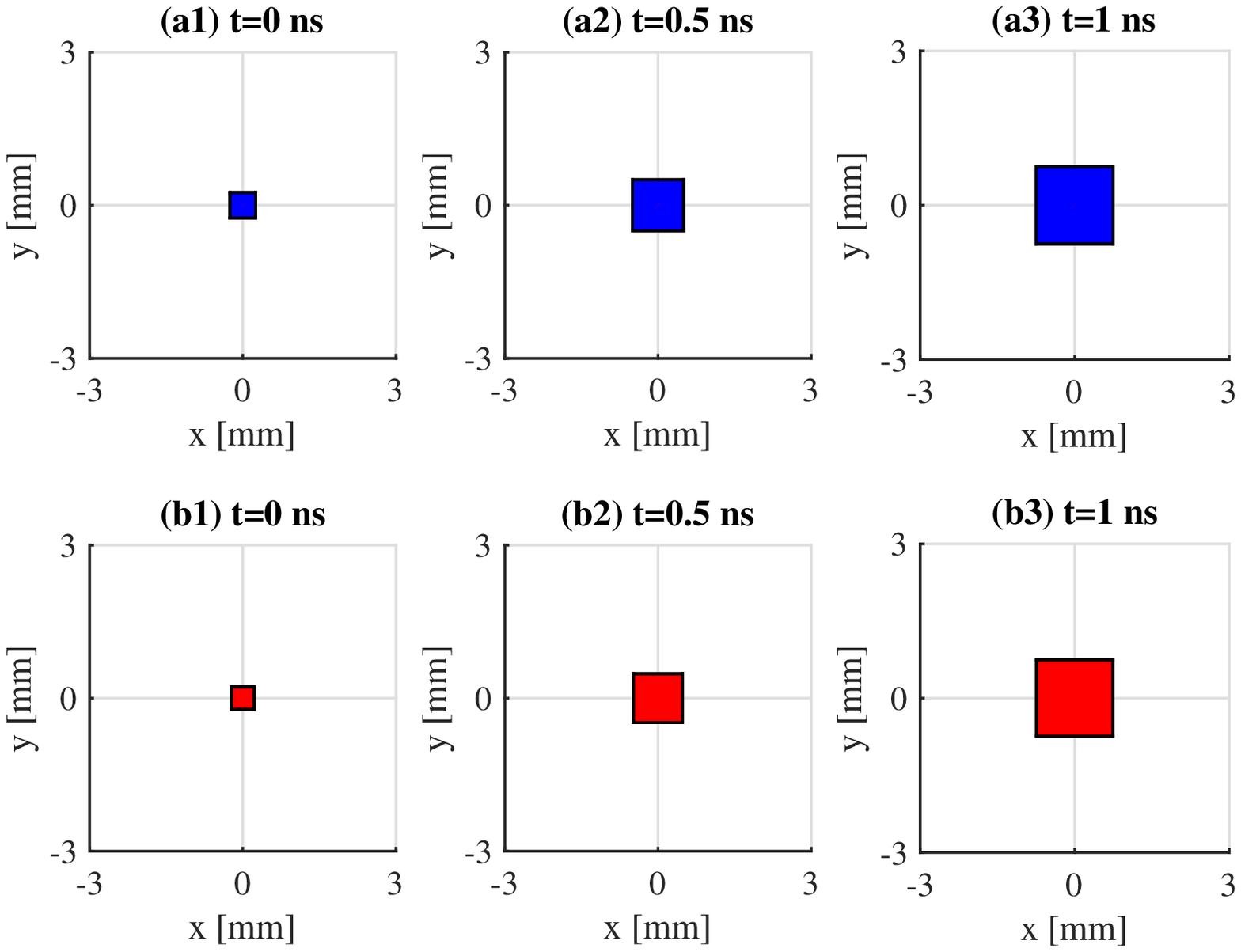}\\
\caption {The exact solution (blue) and the recovered solution (red) for the case $(4a)$ ($\epsilon=1\%$).}\label{cubic-time}
\end{figure}

\begin{figure}[h!]
\centering
\includegraphics[width=1\textwidth,height=0.18\textheight]{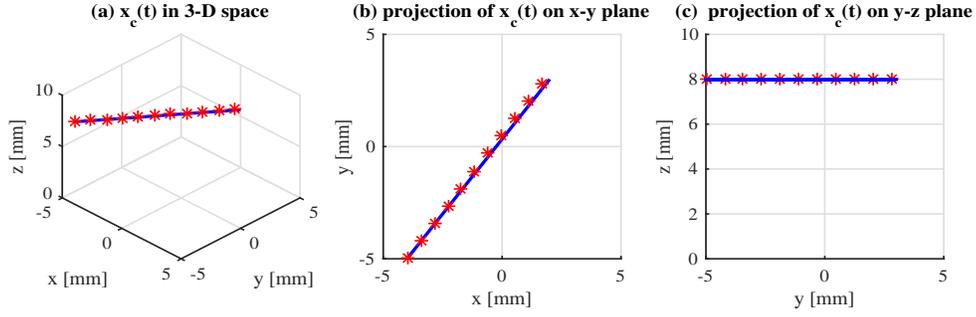}\\
\caption {The exact and recovered $x_c(t)$ for the case $(4b)$ ($\epsilon=1\%$).}\label{t-center}
\end{figure}

\begin{figure}[h!]
\centering
\includegraphics[width=1\textwidth,height=0.42\textheight]{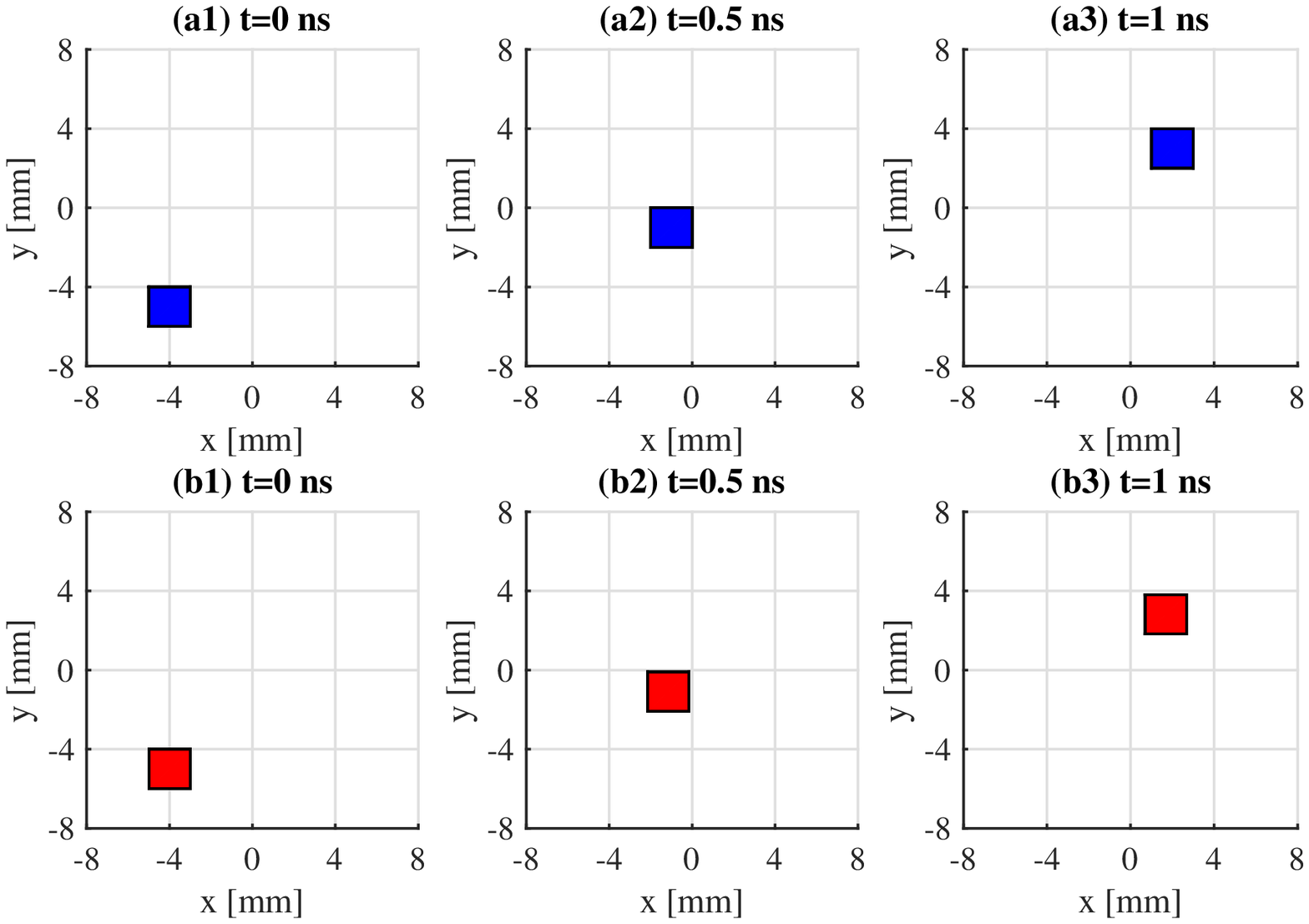}\\
\caption {The exact solution (blue) and recovered solution (red) for the case $(4b)$ ($\epsilon=1\%$).}\label{cubic-time-center}
\end{figure}

\section*{ Acknowledgement} 
Chunlong Sun is supported by National Natural Science Foundation of China (Grant No. 11971104) and by Natural Science Foundation of Jiangsu Province, China (Grant No. BK20210268). Zhidong Zhang is supported by National Natural Science Foundation of China (Grant No. 12101627).

\bibliographystyle{plainurl} 
\bibliography{ref}

\end{document}